\theoremstyle{plain}
\newtheorem{thm}{Theorem}[section]
\newtheorem{cor}[thm]{Corollary}
\newtheorem{lem}[thm]{Lemma}
\newtheorem{prop}[thm]{Proposition}
\theoremstyle{definition}
\newtheorem{defn}{Definition}[section]
\theoremstyle{remark}
\newtheorem{remark}{Remark}
\newcommand{\R}{\mathbb{{R}}}
\newcommand{\N}{\mathbb{{N}}}
\newcommand{\Omegabar}{\overline{\Omega}}
\newcommand{\dOmega}{{\partial\Omega}}
\newcommand{\dB}{{\partial B}}
\newcommand{\ddiv}{\text{div}}
\newcommand{\intOmega}{\int_\Omega}
\newcommand{\xki}{x_{k,i}}
\newcommand{\muki}{\mu_{k,i}}
\title[]{A-priori bounds for a quasilinear problem in critical dimension}
\author[G.~Romani]{Giulio Romani}
\address[G.~Romani]{Aix Marseille Universit\'{e}, CNRS, Centrale Marseille, I2M, 39 rue F. Joliot Curie, 13453 Marseille, France}
\email{giulio.romani@univ-amu.fr}
\begin{document}
	
\begin{abstract}
	\noindent We establish uniform a-priori bounds for solutions of the quasilinear problem
	\begin{equation*}%\label{DIRf}
		\begin{cases}
			-\Delta_Nu=f(u)\quad&\mbox{in }\Omega,\\%
			u=0\quad&\mbox{on }\dOmega,
		\end{cases}
	\end{equation*}
	where $\Omega\subset\R^N$ is a bounded smooth convex domain and $f$ is positive, superlinear and subcritical in the sense of the Trudinger-Moser inequality. The typical growth of $f$ is thus exponential. Finally, a generalisation of the result for nonhomogeneous nonlinearities is given. Using a blow-up approach, this paper completes the results in \cite{DP,LRU}, enlarging the class of nonlinearities for which the uniform a-priori bound applies.
\end{abstract}

\maketitle

	\section{Introduction and main results}
	The study of a-priori bounds for solutions of elliptic boundary value problems, that is, establishing  the existence of a positive constant $C$ such that $\|u\|_{L^\infty(\Omega)}\leq C$ for all solutions $u$, is an interesting and important issue. Indeed, on the one hand it is a key point to show existence of solutions by means of the degree theory. Moreover, a large number of different techniques have been developed to get such results for problems of the kind
	\begin{equation}\label{DIRf_2nd_order}
	\begin{cases}
	-\Delta_m u=f(x,u)\quad&\mbox{in }\Omega,\\%
	u=0\quad&\mbox{on }\dOmega,
	\end{cases}
	\end{equation}
	where $\Omega\subset\R^N$ is typically a smooth bounded domain, $\Delta_mu:=\ddiv(|\nabla u|^{m-2}\nabla u)$ and $f:\Omega\times\R\to\R^+$ has a subcritical growth in the second variable. Here subcriticality is meant in the sense of the Sobolev embeddings.\\
	\indent When $m=2$, namely for second-order semilinear problems, uniform a-priori bounds were firstly obtained for a (non-optimal) class of subcritical nonlinearities by Brezis and Turner in \cite{BT} by means of Hardy-Sobolev inequalities. Some years later, Gidas and Spruck in \cite{GS} and de Figueiredo, Lions and Nussbaum in \cite{dFLN} improved this result, applying respectively a blow-up method and a moving-planes technique together with a Poho\v{z}aev identity. The main assumption therein was that the growth at $\infty$ of $f$ is controlled by a suitable power with exponent $2^*-1=\tfrac{N+2}{N-2}$, $2^*$ being the critical Sobolev exponent. We also point out the more recent work of Castro and Pardo \cite{CP} which enlarges the class of nonlinearities involved.\\
	\indent Regarding \textit{quasilinear} equations, similar results have been achieved for $1<m<N$ by Azizieh and Cl\'{e}ment \cite{AC}, Ruiz \cite{Ruiz}, Dall'Aglio et al. \cite{DaDcGP}, Lorca and Ubilla \cite{LU} and Zou \cite{Zou}. In these works, the nonlinearity may depend also on $x$ and on the gradient, nonetheless its growth at infinity with respect to $u$ should be less than a subcritical power. Originally, most of these results concerned the case $1<m\leq2$ due to some symmetry and monotonicity arguments for solutions to $m$-laplacian equations which were at that time available only for that range of $m$; however, those techniques have been later extended also to the case $m>2$ in the papers \cite{DS1,DS2}. See also the recent work of Damascelli and Pardo \cite{DP}, which extends further the class of nonlinearities for which the a-priori bound applies, in the spirit of \cite{dFLN,CP}.
	\vskip0.2truecm
	\indent When, on the other hand, we consider the limiting case $m=N$, the Sobolev space $W^{1,N}_0(\Omega)$ compactly embeds in all Lebesgue spaces and the well-known Trudinger-Moser inequality shows that the maximal growth for $g$ such that $\intOmega g(u)dx<+\infty$ for any $u\in W^{1,N}_0(\Omega)$ is $g(t)=\exp\{\alpha_N t^{\frac{N}{N-1}}\}$, the constant $\alpha_N>0$ being explicit. Therefore, we can consider problems of kind \eqref{DIRf_2nd_order} with suitable exponential nonlinearities. Nevertheless, a-priori bounds may be found only up to the threshold $t\mapsto e^t$, as the work \cite{BM} of Brezis and Merle shows for $m=N=2$ (see also Section \ref{SectionCounterex}). In addition, the authors proved local a-priori bounds when the nonlinearity is of kind $f(x,t)=V(x)e^t$. This, together with the boundary analysis \cite{dFLN}, yields the desired a-priori bound for convex domains. The results in \cite{BM} have been later on extended in the quasilinear setting by Aguilar Crespo and Peral Alonso in \cite{AP} and we refer also to \cite{EM} for concentration compactness issues in this direction.\\ 
	\indent The boundary value problem \eqref{DIRf_2nd_order} with $m=N$ and general subcritical nonlinearities (in the sense of the Trudinger-Moser inequality) has been studied by Lorca, Ruf and Ubilla in \cite{LRU}, where the authors considered superlinear growths either controlled by the map $t\mapsto e^{t^\alpha}$ for some $\alpha\in(0,1)$, or which are comparable to $t\mapsto e^t$. To prove a uniform a-priori bound in the first alternative, techniques involving Orlicz spaces are applied, while for the second the authors adapt the strategy in \cite{BM}.\\
	\indent The goal of our work is to fill the gap between these growths. Indeed, Passalacqua improved the results of \cite{LRU} in his Ph.D. Thesis \cite{Pass} by means of a subtle modification of the argument therein, but the gap was still not completely filled and this seems to be out of reach with those techniques. Here, instead, we apply a blow-up method inspired by \cite{RobWei} to deduce the desired estimates in the interior of the domain. Moreover, our results complete the analysis in \cite{DP} for the case $m=N$ and, further, to the best of our knowledge, they seem to be new even for the semilinear problem when $N=2$.\\
	\indent We note that a similar approach has been also applied by Mancini and the author in \cite{MIO_2} to study higher-order problems.
	\vskip0.4truecm
	
	\noindent Throughout all the paper, $\Omega$ is a bounded and strictly convex domain in $\R^N$ with $C^{1,\alpha}$ boundary and the nonlinearity $f$ satisfies the following conditions:
	\begin{enumerate}[\text{A}1)]
		\item $f\geq0$, $f\in C^1(\R)$ and $f'\geq0$ on $[M, +\infty)$ for some $M\geq 0$;
		\item there exists a positive constant $d$ such that  $\displaystyle\liminf_{t\to+\infty}\frac{f(t)}{t^{N-1+d}}=+\infty$;
		\item there exists $\displaystyle\lim_{t\to+\infty}{\frac{f'(t)}{f(t)}}:=\beta\in[0,+\infty)$.
	\end{enumerate}
	
	The second assumption is standard (cf. \cite{LRU,DP}) and in the linear case $N=2$ is equivalent to say that $f$ is slightly more than superlinear. Assumption (\textit{A3}) is a control on the growth at $\infty$ of $f$. Indeed, Gronwall Lemma implies that for any $\varepsilon>0$ there are constants $C_\varepsilon,D_\varepsilon>0$ such that
	\begin{equation}\label{fexpbeta}
	\min\{0,\,D_\varepsilon e^{(\beta-\varepsilon)t}-C_\varepsilon\}\leq f(t)\leq D_\varepsilon e^{(\beta+\varepsilon)t}+C_\varepsilon.
	\end{equation}
	We refer to \cite[Lemma 1.1]{MIO_2} for a detailed proof.
	
	In the blow-up analysis, it will be useful to distinguish between the following cases.
	\begin{defn}\label{SubQuasi}
		Let $f$ satisfy (\textit{A1})-(\textit{A3}). We say that $f$ is \textit{subcritical} if
		\begin{equation*}\label{subcritical}
		\lim_{t\to+\infty}{\frac{f'(t)}{f(t)}}=0.
		\end{equation*}
		On the other hand, we say that $h$ is \textit{critical} if
		\begin{equation*}\label{quasicritical}
		\lim_{t\to+\infty}{\frac{f'(t)}{f(t)}}\in(0,+\infty).
		\end{equation*}
	\end{defn}
	\begin{remark}\label{Growths}
		In view of \eqref{fexpbeta}, an example of a subcritical nonlinearity $f(t)=\log^{\tau}(1+t)t^p e^{t^\alpha}$ with $\tau\geq 0$, $p\geq1$ and $\alpha\in[0,1)$, and for a critical nonlinearity is $f(t)=\frac{e^{\gamma t}}{(t+1)^q}$ with $\gamma>0$, $q\in\R$.
	\end{remark}
	\indent Although this distinction will be relevant in the proof, the critical case being more delicate, our main result applies for both classes of nonlinearities. 
	
	\begin{thm}\label{thm}
		Let $\Omega\subset\R^N$, $N\geq 2$, be a bounded strictly convex domain of class $C^{1,\gamma}$ for some $\gamma\in(0,1)$ and let $f$ satisfy (\textit{A1})-(\textit{A3}). Then, there exists a constant $C>0$ such that all weak solutions $u\in W^{1,N}_0(\Omega)$ of the problem
		\begin{equation}\label{DIRf}
		\begin{cases}
		-\Delta_Nu=f(u)\quad&\mbox{in }\Omega,\\%
		u=0\quad&\mbox{on }\dOmega,
		\end{cases}
		\end{equation}
		satisfy $\|u\|_{L^\infty(\Omega)}\leq C$.
	\end{thm}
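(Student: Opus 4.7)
The plan is to argue by contradiction. Suppose there exists a sequence $(u_k)\subset W^{1,N}_0(\Omega)$ of weak solutions of \eqref{DIRf} with $M_k:=\|u_k\|_{L^\infty(\Omega)}\to+\infty$, and pick $x_k\in\Omega$ with $u_k(x_k)=M_k$. The first step is the boundary analysis: since $\Omega$ is strictly convex and of class $C^{1,\gamma}$ and $f\geq 0$, the quasilinear moving-planes machinery of \cite{DS1,DS2} (as used e.g.\ in \cite{DP}) provides a universal strip $\Omegae=\{x\in\Omega:\mathrm{dist}(x,\dOmega)<\varepsilon\}$ on which the sequence $(u_k)$ is uniformly bounded. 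In particular, $\mathrm{dist}(x_k,\dOmega)\geq\varepsilon>0$, so the blow-up will take place in the interior of $\Omega$.

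The core of the argument is an interior blow-up inspired by \cite{RobWei}. In the critical case $\beta>0$, the natural rescaling is
\begin{equation*}
v_k(y):=\beta\bigl(u_k(x_k+r_k y)-M_k\bigr),\qquad r_k:=\beta^{(1-N)/N}f(M_k)^{-1/N},
\end{equation*}
so that $-\Delta_N v_k=f(M_k+v_k/\beta)/f(M_k)$ on the ball $B_{\varepsilon/r_k}(0)$, which exhausts $\R^N$ because $r_k\to 0$ by (\textit{A2}). Using (\textit{A3}) through \eqref{fexpbeta} on each compact set together with the interior $C^{1,\alpha}$ regularity for the $N$-Laplacian, I expect to extract $v_k\to V$ in $C^1_{\mathrm{loc}}(\R^N)$, with
\begin{equation*}
-\Delta_N V=e^V\text{ in }\R^N,\qquad V(0)=\max_{\R^N}V=0,\qquad \int_{\R^N}e^V\,dy<+\infty,
\end{equation*}
the last bound following from Fatou and the scale-invariant energy identity. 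The subcritical case $\beta=0$ is handled by the parallel rescaling with $\sigma_k:=f'(M_k)/f(M_k)\to 0$ in place of $\beta$; it leads to the \emph{same} limit equation, but the convergence of the rescaled nonlinearity is more delicate since there is no fixed exponential rate. A Liouville-type classification then pins $\int_{\R^N}e^V\,dy=:\gamma_N$ to a universal positive constant.

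Reversing the scaling yields the quantised mass concentration
\begin{equation*}
\int_{B_{Rr_k}(x_k)}f(u_k)\,dx\;\longrightarrow\;\beta^{1-N}\gamma_N>0\qquad\text{as }k\to+\infty,\;R\to+\infty\text{ slowly},
\end{equation*}
which must be confronted with an a-priori global control on $\intOmega f(u_k)\,dx$ -- obtained by testing \eqref{DIRf} against a suitable comparison function and exploiting the superlinearity (\textit{A2}) -- to produce the contradiction, after iterating the blow-up around each distinct concentration point as in \cite{RobWei,MIO_2}. The main obstacle I foresee is precisely this bubble analysis in the critical regime $\beta>0$: one needs to rule out mass leaking at the intermediate scales between $r_k$ and $1$ and to show that only finitely many concentration points can appear. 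A secondary but genuinely subtle issue is the uniform convergence of $f(M_k+v_k/\beta)/f(M_k)$ on regions where $v_k$ can dive to $-\infty$, which must be handled through a quantitative sharpening of (\textit{A3})--\eqref{fexpbeta} and is especially delicate in the subcritical regime.
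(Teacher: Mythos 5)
Your overall architecture (boundary control, interior blow-up at the maximum, Liouville classification, energy quantisation, finiteness of the bubble set) matches the paper's up to and including Lemma \ref{lemmabrutto}, and your rescaling $v_k=\beta(u_k-M_k)$ is just the paper's $v$ after the substitution $w=\beta v+(N-1)\log\beta$. The subcritical case is also essentially fine, though you make it harder than necessary: keeping the scale $\mu_k=f(M_k)^{-1/N}$ and letting the limit equation degenerate to $-\Delta_Nv=1$ gives the contradiction immediately by integrating over $\R^N$ against the bound \eqref{RHS}; no second rescaling by $\sigma_k=f'(M_k)/f(M_k)$ is needed.

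The genuine gap is the final contradiction in the critical case $\beta>0$. You propose to confront the quantised concentration $\int_{B_{R\mu_k}(x_k)}f(u_k)\to\theta=\beta^{1-N}\gamma_N$ with the global bound $\intOmega f(u_k)\le\Lambda$, ``after iterating the blow-up around each distinct concentration point.'' But these two facts are perfectly compatible: they only yield $P\theta\le\Lambda$, i.e.\ an upper bound on the number of bubbles (which is exactly how the paper uses them, in the proof of Lemma \ref{lemmabrutto}), not a contradiction. There is no reason why the fixed constant $\theta$ of \eqref{theta} should exceed $\Lambda$. The missing idea is a Brezis--Merle-type argument at a single concentration point $x_0$: one writes the weak-$*$ limit of $f(u_k)$ as $\mu=A(x)\,dx+a_0\delta_{x_0}$ with $a_0\ge\theta$ by \eqref{stimaintegrale}, compares the limit $\tilde u$ of $u_k$ away from $x_0$ with the explicit singular solution $w(x)=\bigl(\tfrac{a_0}{N\omega_N}\bigr)^{1/(N-1)}\log\tfrac{r}{|x-x_0|}$ of $-\Delta_Nw=a_0\delta_{x_0}$, proves $w\le\tilde u$ via a truncation/monotonicity argument, and then observes that the \emph{precise value} of $\theta$ forces the exponent $(\beta-\varepsilon)\bigl(\tfrac{a_0}{N\omega_N}\bigr)^{1/(N-1)}\ge\tfrac{\beta-\varepsilon}{\beta}\cdot\tfrac{N^2}{N-1}>N$, so that $\int_{B_r(x_0)}e^{(\beta-\varepsilon)w}=+\infty$ while \eqref{fexpbeta} and \eqref{RHS} bound $\int_{B_r(x_0)}e^{(\beta-\varepsilon)\tilde u}$ by $C(\Lambda)$. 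Without this step (or an equivalent Poho\v{z}aev-type identity), your argument does not close; the quantisation alone cannot produce the contradiction.
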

	We recall that $u\in W^{1,N}_0(\Omega)$ is a \textit{weak solution} of problem \eqref{DIRf} if
	$$\intOmega|\nabla u|^{N-2}\nabla u\nabla\varphi=\intOmega f(u)\varphi,\qquad\mbox{for all}\,\,\varphi\in C^\infty_0(\Omegabar).$$
	Notice that, unlike \cite{LRU,DP}, we are not prescribing that our solutions are in $L^\infty(\Omega)$. Indeed, by \cite[Proposition 6.2.2]{Pass}, every $W^{1,N}_0(\Omega)$ weak solution of problem \eqref{DIRf} is classical, namely it belongs to $C^{1,\alpha}(\Omegabar)$ for some $\alpha\in(0,1)$ (see also \cite[Proposition 3.1]{DaDcGP}).
	\begin{remark}
		By the regularity theory of quasilinear problems by \cite{Lieb,Tolk} the $C^{1,\gamma}$ assumptions on $\dOmega$ are sufficient to guarantee that the uniform a-priori estimate in Theorem \ref{thm} is actually in $C^{1,\tilde\gamma}(\Omegabar)$ for some $\tilde\gamma\in(0,1)$.
	\end{remark}
	
	\begin{remark}
		We may set our problem in the framework of \textit{entropy solutions} and obtain the same result. Indeed, one may prove that, under conditions (\textit{A1})-(\textit{A3}), such solutions are indeed weak and in turn classical. For further details, we refer to Section \ref{SectionCounterex}, in particular to Remark \ref{RmkCounterex}.
	\end{remark}
	\vskip0.2truecm
	Once the uniform a-priori bound of Theorem \ref{thm} is established, the existence of a weak solution for problem \eqref{DIRf} may be proved by means of the topological degree theory, provided a control of the behaviour of $f$ in 0 is imposed.
	\begin{prop}\label{Existence}
		In addition of the assumptions of Theorem \ref{thm}, suppose that
		\begin{equation*}%\label{HP}
		\limsup_{t\to0^+}\frac{f(t)}{t^{N-1}}<\lambda_1,
		\end{equation*}
		where $\lambda_1$ denotes the first eigenvalue of $-\Delta_N$. Then, problem \eqref{DIRf} admits a positive weak solution.
	\end{prop}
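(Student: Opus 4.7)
The plan is to reformulate \eqref{DIRf} as a fixed point equation and apply Leray-Schauder topological degree on two concentric balls of $C^1_0(\Omegabar)$. Define the solution operator $T:C^1_0(\Omegabar)\to C^1_0(\Omegabar)$ by setting $T(u)=v$, where $v$ is the unique weak solution of $-\Delta_Nv=f(u^+)$ in $\Omega$ with $v=0$ on $\dOmega$. By the quasilinear regularity theory for the $N$-Laplacian, $T$ is well-defined, continuous and compact, and the strong maximum principle ensures that any non-trivial fixed point of $T$ is strictly positive, hence a positive weak solution of \eqref{DIRf}.

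To compute the degree on a small ball, I would exploit the sub-eigenvalue behaviour at the origin. By hypothesis, there exist $\kappa<\lambda_1$ and $\delta>0$ such that $f(s)\leq\kappa s^{N-1}$ for $s\in[0,\delta]$. Consider the linear homotopy $H(s,u):=u-sT(u)$, $s\in[0,1]$. Any nonzero fixed point of $u=sT(u)$ is non-negative and satisfies $-\Delta_Nu=s^{N-1}f(u)$; testing against $u$ and using the variational characterisation $\lambda_1\intOmega w^N\leq\intOmega|\nabla w|^N$ for $w\in W^{1,N}_0(\Omega)$, I obtain, as soon as $\|u\|_\infty\leq\delta$,
$$\intOmega|\nabla u|^N=s^{N-1}\intOmega f(u)u\leq s^{N-1}\kappa\intOmega u^N\leq\frac{\kappa}{\lambda_1}\intOmega|\nabla u|^N,$$
which contradicts $\kappa<\lambda_1$ unless $u\equiv 0$. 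Hence, for $\rho>0$ small enough, $H$ is admissible on $\partial B_\rho$, and homotopy invariance gives $\deg(I-T,B_\rho,0)=\deg(I,B_\rho,0)=1$.

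To compute the degree on a large ball, I would introduce the family $T_t(u):=(-\Delta_N)^{-1}\bigl(f(u^+)+t\varphi_1\bigr)$ for $t\geq0$, where $\varphi_1>0$ is a first eigenfunction of $-\Delta_N$. Testing $-\Delta_Nu=f(u)+t\varphi_1$ against $\varphi_1$ and exploiting the superlinear growth \emph{(A2)}, a standard computation shows that no fixed point of $T_t$ can exist once $t$ exceeds some threshold $T_\ast>0$. Moreover, the nonhomogeneous generalisation of Theorem \ref{thm} announced in the abstract yields a uniform $C^1$-bound $C_0$ for all fixed points of the family $\{T_t\}_{t\in[0,T_\ast]}$. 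Picking $R>\max(C_0,\rho)$, the homotopy $t\mapsto T_t$ is admissible on $\partial B_R$ and hence $\deg(I-T,B_R,0)=\deg(I-T_{T_\ast},B_R,0)=0$. By additivity, $\deg(I-T,B_R\setminus\overline{B_\rho},0)=-1$, producing a fixed point of $T$ in the annulus, i.e., a positive weak solution of \eqref{DIRf}.

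The main obstacle I expect is guaranteeing uniform a-priori bounds along the homotopy $T_t$: the degree-theoretic scheme breaks down without them, and this is precisely where the inhomogeneous extension of Theorem \ref{thm} becomes indispensable. Checking that the blow-up analysis survives the presence of the smooth, bounded forcing $t\varphi_1$ is the delicate point; once that uniform estimate is granted, the rest of the computation follows along classical Leray-Schauder lines.
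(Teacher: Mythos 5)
The paper deliberately omits this proof, deferring to \cite[Theorem 1.5]{DP}, and your degree-theoretic scheme --- degree $1$ on a small ball via the sub-eigenvalue condition at the origin, degree $0$ on a large ball via the forcing $t\varphi_1$ excluded for large $t$ by superlinearity, with admissibility of the second homotopy supplied by the a-priori bound of Theorem \ref{thmX} --- is exactly that standard argument. The one step where the quasilinear structure requires care is the nonexistence for large $t$: ``testing against $\varphi_1$'' in the linear sense does not produce $\lambda_1\int u^{N-1}\varphi_1$ since $-\Delta_N$ is neither linear nor self-adjoint, and one should instead combine the comparison principle (to get $u\geq c\,t^{1/(N-1)}\varphi_1$) with the Picone-type inequality of Allegretto--Huang \cite{AH}; this is routine and is how \cite{DP} proceeds.
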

	We will not give the proof of Theorem \ref{Existence}, being rather standard once Theorem \ref{thm} is established, addressing the interested reader to \cite[Theorem 1.5]{DP}.

	\vskip0.2truecm
	This paper is organized as follows: in Section \ref{SectionPrel&Boundary&RHS} we collect some auxiliary results as well as boundary and energy estimates obtained in \cite{LRU}; Section \ref{SectionBlowup} contains the proof of Theorem \ref{thm} distinguishing between the subcritical and the critical case. In Section \ref{SectionGen} we give a generalisation of our result for a class of nonhomogeneous problems and finally in Section \ref{SectionCounterex} we provide an example of nonlinearities which, although being subcritical in the sense of Trudinger-Moser, do not satisfy assumption (\textit{A3}) and for which one can find unbounded solutions.
	
	\section{Preliminary estimates}\label{SectionPrel&Boundary&RHS}
	In this section, we state some known results which will be used in the sequel. We begin with a result by Serrin, together with the regularity theory from \cite{Lieb}. Next, we state a Harnack-type inequality.% which holds in the quasilinear setting.
	\begin{lem}[\cite{Serrin}, Theorem 2]\label{LocalEstimates_Serrin}
		Let $u$ be a weak solution of $-\Delta_Nu=h$ in $B_{2R}(0)\subset\Omega$. Then,
		\begin{equation}
		\|u\|_{C^{1,\alpha}(B_R(0))}\leq C\big(\|u\|_{L^N(B_{2R}(0))}+KR\big),
		\end{equation}
		for a suitable $\alpha\in(0,1)$ and positive constants $C(\alpha, R, \|h\|_{L^\frac{N}{N-\varepsilon}(\Omega)})$ and $K(R, \|h\|_{L^\frac{N}{N-\varepsilon}(\Omega)})$, for some $\varepsilon\in(0,1)$.
	\end{lem}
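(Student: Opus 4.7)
The plan is to split the estimate into two independent pieces: first establish an $L^\infty$ bound on $u$ in an intermediate ball via a Moser-type iteration in the spirit of Serrin, then upgrade this bound to the desired $C^{1,\alpha}$ estimate by invoking the quasilinear regularity theory of DiBenedetto, Tolksdorf and Lieberman (this is where the reference to \cite{Lieb} comes into play).

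For the first step, I would test the weak formulation against $\eta^N(|u|+\delta)^{s-N}u$ with $s>N$, where $\eta$ is a smooth cutoff supported in nested balls $B_{r_k}\subset B_{2R}(0)$ and $\delta>0$ regularises the degeneracy at $\{\nabla u=0\}$. After expanding the divergence and absorbing the cross term $\eta^{N-1}|\nabla\eta|(|u|+\delta)^{s-N+1}|\nabla u|^{N-1}$ by Young's inequality, one obtains a Caccioppoli-type inequality for $(|u|+\delta)^{s/N}\eta$. Combined with the Sobolev embedding $W^{1,N}_0\hookrightarrow L^{\chi N}$ for some $\chi>1$ (available because of the compact embedding in the critical-exponent case), this yields a reverse-Hölder chain
\begin{equation*}
    \|u\|_{L^{\chi s}(B_{r_{k+1}})}\leq C(r_k-r_{k+1})^{-1}\|u\|_{L^{s}(B_{r_k})}+\text{(terms involving $h$)}.
\end{equation*}
Choosing exponents $s_k:=\chi^{k}N$ and radii $r_k:=R(1+2^{-k})$, iterating and sending $\delta\to 0$ produces the desired $L^\infty(B_R(0))$ estimate in terms of $\|u\|_{L^N(B_{2R}(0))}$ and $\|h\|_{L^{N/(N-\varepsilon)}(\Omega)}$.

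For the second step, once $u\in L^\infty(B_{3R/2}(0))$ is available, the equation $-\Delta_Nu=h$ with $h\in L^{N/(N-\varepsilon)}$ fits into the framework of Lieberman's extension of the Tolksdorf--DiBenedetto $C^{1,\alpha}$ theory for degenerate quasilinear equations, yielding
\begin{equation*}
    \|u\|_{C^{1,\alpha}(B_R(0))}\leq \tilde C\bigl(\|u\|_{L^\infty(B_{3R/2}(0))}+\|h\|_{L^{N/(N-\varepsilon)}(\Omega)}\bigr),
\end{equation*}
with $\tilde C$ and $\alpha\in(0,1)$ depending only on $R$ and the norm of $h$. Combining with the $L^\infty$ bound from the first step produces the claimed inequality after a harmless redefinition of the constants $C$ and $K$.

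The main technical obstacle is the bookkeeping of the Moser iteration: one must track how the constants produced by Young's inequality and the cutoff factors $(r_k-r_{k+1})^{-1}$ propagate through the exponents $s_k=\chi^{k}N$, and verify that the geometric sum $\sum_k\chi^{-k}\log C_k$ converges; the margin $\varepsilon>0$ in the integrability of $h$ is exactly what guarantees that this iteration stays in a non-borderline regime. A secondary but delicate point is the proper $\delta$-regularisation of the test function, needed because of the lack of strict convexity of $\xi\mapsto|\xi|^N$ at the origin, together with the limit passage $\delta\to 0$ in the Caccioppoli chain.
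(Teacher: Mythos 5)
Your two-step plan --- a Serrin/Moser iteration giving a local $L^\infty$ bound, followed by the Tolksdorf--DiBenedetto--Lieberman regularity theory to upgrade to $C^{1,\alpha}$ --- is precisely the derivation that the paper has in mind: the lemma is stated as a pure citation (Serrin's Theorem~2 for the sup bound, with \cite{Lieb} invoked in the surrounding text for the $C^{1,\alpha}$ part), so your first step reconstructs exactly the quoted result. The iteration scheme you describe (test functions $\eta^N(|u|+\delta)^{s-N}u$, Caccioppoli inequality, reverse H\"older chain with $s_k=\chi^kN$, $r_k=R(1+2^{-k})$) is Serrin's argument, and the hypothesis $h\in L^{N/(N-\varepsilon)}$ is exactly his integrability condition for the inhomogeneous term when $p=n=N$. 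One small correction of emphasis: the $\delta$-regularisation is needed to make the power of $u$ an admissible test function, not because of any degeneracy of $\xi\mapsto|\xi|^N$ at $\nabla u=0$.

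There is, however, a genuine gap in your second step. The interior $C^{1,\alpha}$ theory for $-\Delta_Nu=h$ requires $h\in L^q$ with $q>N$ (or $h$ bounded); it does \emph{not} apply to $h\in L^{N/(N-\varepsilon)}$, whose exponent lies in $(1,\tfrac{N}{N-1})$ and is therefore far below $N$. Already for $N=2$ and the Laplacian, $h(x)=|x|^{-2+\varepsilon'}$ belongs to $L^{2/(2-\varepsilon)}$ for $\varepsilon<\varepsilon'$ while the corresponding solution behaves like $|x|^{\varepsilon'}$ near the origin and is not $C^1$. Under the stated hypothesis on $h$, Serrin's theory yields local boundedness and $C^{0,\alpha}$ regularity of $u$, but not a gradient estimate. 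The resolution is that in every application in the paper (e.g.\ equation \eqref{DeltaN_1} and the bound \eqref{ii_1}) the right-hand side is uniformly bounded, and it is this $L^\infty$ control of $h$ --- not the $L^{N/(N-\varepsilon)}$ norm --- that lets Lieberman's theory produce the $C^{1,\alpha}$ estimate. If you want your proof to stand on its own, you should either strengthen the hypothesis on $h$ in the second step to $h\in L^q$, $q>N$ (or $h\in L^\infty$), or weaken the conclusion of the lemma to a $C^{0,\alpha}$ bound; as written, the sentence claiming that $h\in L^{N/(N-\varepsilon)}$ ``fits into the framework'' of the $C^{1,\alpha}$ theory is false.
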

	\begin{lem}[\cite{Serrin}, Theorem 6]\label{Harnack_Serrin}
		Let $u\geq0$ be a solution of $-\Delta_Nu=h$ in $B_{3R}(0)\subset\Omega$. Then
		$$\max_{B_R(0)}(u)\leq C\big(\min_{B_R(0)}(u)+K\big),$$
		where the constants $C,K$ depending on $R$ and on $\|h\|_{L^\frac{N}{N-\varepsilon}(\Omega)}$, $\varepsilon\in(0,1)$.
	\end{lem}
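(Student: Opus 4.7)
The statement in question is Serrin's Harnack inequality for the $N$-Laplacian, which is cited rather than re-proved in this paper. Nevertheless, let me sketch how I would approach proving it from scratch, following the classical Moser--Serrin philosophy.

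The plan is to combine two one-sided estimates by a John--Nirenberg argument. First, I would establish a \emph{local supremum bound}: test the equation $-\Delta_Nu=h$ with $\varphi=\eta^N(u+k)^{q-1}$ for parameters $q>1$ and a suitable shift $k>0$ chosen to absorb the inhomogeneous term (a natural choice being $k\sim \|h\|_{L^{N/(N-\varepsilon)}}^{1/(N-1)} R^{N\varepsilon/(N-1)(N-\varepsilon)}$, coming from H\"older's and Sobolev's inequalities). After distributing $\nabla(\eta^N(u+k)^{q-1})$, applying Young's inequality to absorb gradient terms on $u$, and invoking the Sobolev embedding $W^{1,N}\hookrightarrow L^{N^*}$ in a truncated form (or iterating via self-improvement), one obtains a reverse H\"older relation of the form $\|u+k\|_{L^{\chi q}(B_\rho)} \leq (C/(R-\rho))^{\gamma}\|u+k\|_{L^q(B_R)}$ with $\chi>1$. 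Iterating over a dyadic sequence of shrinking radii gives
\[
\sup_{B_{R/2}} (u+k) \leq C\Bigl(\fint_{B_R}(u+k)^{p}\,dx\Bigr)^{1/p}
\]
for \emph{any} $p>0$.

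Second, I would establish a \emph{weak Harnack inequality} for the infimum side. Here one tests with negative powers $\varphi=\eta^N(u+k)^{-s}$ with $s>0$. The monotonicity of $t\mapsto |t|^{N-2}t$ and Young's inequality again produce Caccioppoli-type estimates, and a Moser iteration on $(u+k)^{-\sigma}$ yields
\[
\Bigl(\fint_{B_R}(u+k)^{-q_0}\,dx\Bigr)^{-1/q_0} \leq C\,\inf_{B_{R/2}}(u+k)
\]
for some small $q_0>0$.

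The third, and most delicate, step is to bridge these two one-sided bounds at some common exponent. Testing with $\varphi=\eta^N(u+k)^{1-N}$ and exploiting the Poincar\'e inequality shows that $w:=\log(u+k)$ satisfies a BMO-type estimate: $\fint_{B_r}|w-w_{B_r}|^N\,dx \leq C$ uniformly on concentric balls. The John--Nirenberg inequality then provides an exponent $p_0>0$ (depending only on $N$ and on the BMO constant) such that
\[
\Bigl(\fint_{B_R}(u+k)^{p_0}\,dx\Bigr)\Bigl(\fint_{B_R}(u+k)^{-p_0}\,dx\Bigr) \leq C.
\]
Chaining this with the supremum bound at $p=p_0$ and the weak Harnack bound at $q_0=p_0$ produces $\sup_{B_{R/2}}(u+k) \leq C\inf_{B_{R/2}}(u+k)$, which, after recalling the definition of $k$, is the stated inequality.

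The main obstacle is twofold: first, the quasilinear structure forbids any linearization, so all Caccioppoli estimates must be derived directly from the variational inequality and handled with the correct algebraic care in the $N$-Laplacian setting; second, tracking the inhomogeneous term $h$ through every iteration requires choosing the shift $k$ and the dual integrability $L^{N/(N-\varepsilon)}$ precisely so that the perturbation does not degrade at the end of the Moser scheme. This careful bookkeeping is exactly the technical core of Serrin's original argument.
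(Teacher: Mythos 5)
The paper does not prove this lemma at all --- it is imported verbatim as Theorem 6 of Serrin's paper \cite{Serrin} --- so there is no internal argument to compare against. Your sketch is a correct and faithful outline of Serrin's actual proof (local supremum bound by Moser iteration on positive powers of $u+k$ with the shift $k\sim\|h\|_{L^{N/(N-\varepsilon)}}^{1/(N-1)}$ absorbing the inhomogeneity, weak Harnack inequality via negative powers, and the crossover through the BMO estimate for $\log(u+k)$ combined with John--Nirenberg), and the resulting inequality $\sup(u+k)\leq C\inf(u+k)$ yields the stated form $\max_{B_R}u\leq C(\min_{B_R}u+K)$ after a standard covering argument to pass from the $B_{R/2}\subset B_R$ pair to the $B_R\subset B_{3R}$ pair.
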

	The next result is taken from \cite[Propositions 2.1\,-\,3.1]{LRU} and concerns the behaviour near the boundary and the energy for all solutions of problem \eqref{DIRf}.
	\begin{prop}\label{Prop_boundary&energy}
		There exist positive constants $r$, $C_0$ such that every solution $u\in W^{1,N}_0(\Omega)$ of \eqref{DIRf} verifies
		\begin{equation}\label{bdry}
		u(x)\leq C_0\quad\mbox{and}\quad|\nabla u(x)|\leq C_0,\quad x\in\Omega_r,
		\end{equation}
		where $\Omega_r:=\{x\in\Omega\,|\,d(x,\dOmega)\leq r\}$, and
		\begin{equation}\label{RHS}
		\int_\Omega f(u)\leq\Lambda.
		\end{equation}
	\end{prop}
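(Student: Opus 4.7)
The plan is to derive the two estimates in sequence: first the pointwise boundary bounds \eqref{bdry} via a moving planes argument, and then the global energy bound \eqref{RHS} as an essentially immediate consequence via the divergence theorem.

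For \eqref{bdry} I would follow the de Figueiredo--Lions--Nussbaum scheme, adapted to the $N$-Laplacian through the monotonicity and symmetry results of Damascelli--Sciunzi. Using the strict convexity and $C^{1,\gamma}$ regularity of $\dOmega$, at each $x_0\in\dOmega$ one sets up a family of reflections across affine hyperplanes parallel to the tangent plane at $x_0$. A weak comparison principle for $-\Delta_N$ in narrow strips then allows the planes to be pushed a universal distance $\lambda_0>0$ (depending only on $\Omega$) inward, yielding monotonicity of $u$ along the inward normal within a cap of width $\lambda_0$. To turn this monotonicity into the pointwise bound I would couple it with an integral estimate of the form $\intOmega f(u)\varphi_1\,dx\leq C$, obtained by a Picone-type identity applied to the first eigenfunction $\varphi_1$ of $-\Delta_N$, together with assumption (\textit{A2}) which guarantees the superlinear control $f(u)\gtrsim u^{N-1+d}$ for $u$ large. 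If $u(x^*)$ were arbitrarily large at some $x^*\in\Omega_r$, monotonicity along the normal would force $u$ to remain large on a set of uniform measure, contradicting this integral bound. The gradient part of \eqref{bdry} then follows from Lemma \ref{LocalEstimates_Serrin} combined with Lieberman's boundary regularity theory, since the just-established uniform $L^\infty$-bound in $\Omega_r$ makes $f(u)$ bounded there.

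Once \eqref{bdry} is available, the energy bound \eqref{RHS} is immediate. Since $f\geq0$ makes $u$ an $N$-superharmonic function vanishing on $\dOmega$, the weak maximum principle gives $u\geq0$ in $\Omega$, hence $\partial_\nu u=-|\nabla u|$ on $\dOmega$; as $u\in C^{1,\alpha}(\Omegabar)$ (recalled in the excerpt), the divergence theorem applied to \eqref{DIRf} yields
\[
\intOmega f(u)\,dx \;=\; \int_{\dOmega}|\nabla u|^{N-1}\,d\sigma \;\leq\; C_0^{N-1}|\dOmega| \;=:\; \Lambda.
\]

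I expect the main obstacle to be the moving planes step. The $N$-Laplacian is degenerate elliptic and the strong maximum principle can fail at critical points of $u$, so the usual reflection argument requires the refined techniques of Damascelli--Sciunzi together with narrow-strip weak comparison principles to exclude ``plateau'' behaviour across candidate symmetry hyperplanes. The integral estimate needed to close the argument is likewise less straightforward than in the semilinear case: direct testing of \eqref{DIRf} with $\varphi_1$ does not give a clean identity because of the nonlinearity, and one has to route through a Picone-type inequality to exploit the superlinear control of (\textit{A2}).
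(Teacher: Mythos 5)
Your proposal follows essentially the same route as the paper, which simply cites \cite[Propositions 2.1--3.1]{LRU} and sketches exactly your scheme: a Picone-inequality weighted integral bound combined with a moving-planes/monotonicity argument \`a la \cite{dFLN} for the pointwise estimates in \eqref{bdry}, followed by regularity theory for the gradient bound. The only cosmetic difference is in deriving \eqref{RHS}: you integrate the equation and pass to the boundary term $\int_{\dOmega}|\nabla u|^{N-1}$ via the divergence theorem, whereas the paper tests the weak formulation with a cutoff whose gradient is supported in $\Omega_r$, which reaches the same conclusion from \eqref{bdry} without needing to justify a Gauss--Green formula for the merely H\"older-continuous field $|\nabla u|^{N-2}\nabla u$.
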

	Roughly speaking, the proof of the first inequality in \eqref{bdry} is obtained similarly to the analogous estimate in the second-order case (see \cite{dFLN}) by means of a Picone inequality originally proved in \cite{AH} (see also \cite[Theorem 2.5]{DP}). This implies the second inequality in \eqref{bdry} by the standard regularity estimates. Finally, \eqref{RHS} follows from \eqref{bdry} testing the equation with an appropriate function with vanishing gradient outside $\Omega_r$.\\
	We point out that the uniform estimates \eqref{bdry}-\eqref{RHS} do \textit{not} rely on assumption (\textit{A3}), so they can be deduced for any superlinear growth (in the sense of assumption (\textit{A2})).

	\section{Uniform bounds inside the domain}\label{SectionBlowup}
	
	The argument is based on a blow-up method inspired by \cite{RobWei} and developed in \cite{MIO_2}: supposing the existence of an unbounded sequence of solutions, we define a rescaled sequence which locally converges to a solution of a Liouville's equation on $\R^N$. Then, we have to distinguish two cases according to the growth of $f$. If $f$ is subcritical, the right-hand side of the limit equation is constant and this readily yields the contradiction. In the critical framework, the Liouville's equation is still nonlinear, so a more accurate analysis is needed: we shall see that the energy concentrates around the blow-up points with a threshold which is too large for the energy bound of Proposition \ref{Prop_boundary&energy} to hold.
	
	\vskip0.3truecm
	
	Let $(u_k)_k$ a sequence of solutions of \eqref{DIRf} and $(x_k)_k\subset\Omega$ points such that 
	\begin{equation}\label{M_k}
	u_k(x_k)=\|u_k\|_{L^\infty(\Omega)}=:M_k\nearrow+\infty.
	\end{equation}
	Since $\Omega$ is bounded, then, up to a subsequence, $x_k\to x_\infty\in\Omega$, where $d(x_\infty,\dOmega)>0$ by Proposition \ref{Prop_boundary&energy}. Moreover, we define $v_k:\Omega_k\to\R^-$ as
	\begin{equation}\label{vk}
	v_k(x):=u_k(x_k+\mu_kx)-M_k,
	\end{equation}
	where $\Omega_k:=\frac{\Omega-x_k}{\mu_k}$ and
	\begin{equation}\label{muk}
	\mu_k:=\dfrac1{(f(M_k))^{1/N}}.
	\end{equation}
	Notice that $\mu_k\rightarrow0$ implies $\Omega_k\nearrow\R^N$, since $x_\infty\in\Omega$. Moreover, each $v_k$ satisfies (in a weak sense) the equation
	\begin{equation}\label{DeltaN_1}
	-\Delta_N v_k(x)=\frac{f(u_k(x_k+\mu_kx))}{f(M_k)}\in [0,1],\qquad x\in\Omega_k.
	\end{equation}
	Indeed, for any test function $\varphi\in C^\infty_0(\Omegabar)$ there holds
	\begin{equation*}
	\begin{split}
	\int_{\Omega_k}|\nabla v_k|^{N-2}\nabla v_k\nabla\varphi&=\int_{\Omega_k}\mu_k^{N-1}|\nabla u_k|^{N-2}(x_k+\mu_kx)\nabla u_k(x_k+\mu_kx)\nabla\varphi(x)dx\\
	&
	=\mu_k^{N-1}\intOmega|\nabla u_k|^{N-2}(y)\nabla u_k(y)(\nabla \varphi)\big(\tfrac{y-x_k}{\mu_k}\big)\mu_k\frac{dy}{\mu_k^N}\\
	&=\intOmega f(u_k(y))\varphi\big(\tfrac{y-x_k}{\mu_k}\big)dy\\
	&=\int_{\Omega_k} f(u_k(x_k+\mu_kx))\varphi(x)\mu_k^Ndx\\
	&=\int_{\Omega_k}\frac{f(u_k(x_k+\mu_kx))}{f(M_k)}\varphi(x)dx.
	\end{split}
	\end{equation*}
	Moreover, fixing $R>0$ and looking at the behaviour of the sequence $(v_k)_k$ in $B_R(0)$, applying the Harnack inequality given by Lemma \ref{Harnack_Serrin} to $w_k:=-v_k\geq0$, we find
	\begin{equation}\label{Harnack_applied}
	\|v_k\|_{L^\infty(B_R(0))}=\max_{B_R(0)}|v_k|=\max_{B_R(0)} w_k\leq C(R)(\min_{B_R(0)} w_k+K(R))=\tilde K(R).
	\end{equation}
	Therefore, by the local estimate provided by Lemma \ref{LocalEstimates_Serrin}, we infer that, up to a subsequence, $v_k\to v$ in $C^{1,\alpha}_{loc}(\R^N)$. We claim that $v$ is a weak solutions of the Liouville's equation
	\begin{equation}\label{Eq_R^N}
	-\Delta_N v=e^{\beta v}\quad\,\mbox{in}\;\R^N,
	\end{equation}
	where $\beta$ is defined in (\textit{A3}). Indeed, the equation \eqref{DeltaN_1} can be rewritten as
	\begin{equation*}
	-\Delta_N v_k(x)=\exp\big\{\log\circ f(u_k(x_k+\mu_kx))-\log\circ f(M_k)\big\}
	\end{equation*}
	and thus, by the first-order Taylor expansion of $\log\circ f$ around $M_k$, one finds
	\begin{equation}\label{DeltaN_2}
	-\Delta_N v_k(x)=e^{\frac{f'(z_k(x))}{f(z_k(x))}(u_k(x_k+\mu_kx)-M_k)}=e^{\frac{f'(z_k(x))}{f(z_k(x))}v_k(x)},
	\end{equation}
	where $z_k(x):=M_k+\theta(x)(u_k(x_k+\mu_kx)-M_k)=M_k+\theta(x) v_k(x)$, $\theta(x)\in(0,1)$. Since $v_k\to v$ uniformly on compact sets and $M_k\to+\infty$, then $z_k(x)\to+\infty$ uniformly on compact sets, so \eqref{Eq_R^N} follows by taking the limits as $k\to+\infty$ in \eqref{DeltaN_2}.\\
	\indent Consequently, we split our analysis according to whether $f$ is subcritical or critical in the sense of Definition \ref{SubQuasi}.

	\subsection{The subcritical case ($\boldsymbol{\beta=0}$)}\label{Subcritical}
	In this case the equation \eqref{Eq_R^N} satisfied by the limit function $v$ reduces to
	\begin{equation}\label{Eq_R^N_subcritical}
	-\Delta_N v=1\quad\,\mbox{in}\;\R^N.
	\end{equation}
	Recalling the energy estimate \eqref{RHS} of Proposition \ref{Prop_boundary&energy}, it is sufficient to integrate \eqref{Eq_R^N_subcritical} on $\R^N$ to get the desired contradiction, which proves Theorem \ref{thm} for such nonlinearities:
	\begin{equation}\label{assurdo1}
	\begin{split}
	+\infty=\int_{\R^N} dx&=\int_{\R^N}\lim_{k\to+\infty}e^{\frac{f'(z_k(x))}{f(z_k(x))}v_k(x)}\chi_{\Omega_k}(x)dx\\
	&=\int_{\R^N}\lim_{k\to+\infty}e^{\log(f(u_k(x_k+\mu_kx)))-\log(f(M_k))}\chi_{\Omega_k}(x)dx\\
	&\leq\liminf_{k\to+\infty}\int_{\Omega_k}\dfrac{f(u_k(x_k+\mu_kx))}{f(M_k)}dx\\
	&\stackrel{[y=x_k+\mu_kx]}{=}\liminf_{k\to+\infty}\intOmega f(u_k(y))dy\leq\Lambda.
	\end{split}
	\end{equation}

	\subsection{The critical case ($\boldsymbol{\beta>0}$)}
	With the same steps as in \eqref{assurdo1}, one actually gets
	\begin{equation}\label{energiafinita}
	\int_{\R^N} e^{\beta v}dx\leq\Lambda.
	\end{equation}
	Therefore, we can characterize the limit profile $v$ by standard computations from a Liouville-type result by Esposito \cite[Theorem 1.1]{Esp} as
	\begin{equation}\label{charact}
	v(x)=-\frac N\beta\log\left(1+\left(\frac{\beta^{N-1}}{C_N}|x|^N\right)^{\frac 1{N-1}}\right),
	\end{equation}
	where $C_N:=N\left(\frac{N^2}{N-1}\right)^{N-1}$. Furthermore, we claim that the energy concentrates around the blow-up points sequence $(x_k)_k$, namely there exists a constant $\theta>0$ such that
	\begin{equation}\label{stimaintegrale}
	\lim_{R\rightarrow+\infty}\liminf_{k\rightarrow+\infty}\int_{B_{R\mu_k}(x_k)}f(u_k(y))dy\geq\theta>0,
	\end{equation}
	where $\theta$ is characterized by 
	\begin{equation}\label{theta}
		\theta=\frac{N\omega_N}{\beta^{N-1}}\left(\frac{N^2}{N-1}\right)^{N-1},
	\end{equation}
	with $\omega_N$ denoting the volume of the unit ball in $\R^N$. Indeed,
	\begin{equation*}
	\begin{split}
	0<\theta:&=\int_{\R^N}e^{\beta v}=\lim_{R\rightarrow+\infty}\int_{B_R(0)}\lim_{k\rightarrow+\infty}e^{\big(\frac{f'(z_k(x))}{f(z_k(x))}\big)v_k(x)}dx\\
	&\leq\lim_{R\rightarrow+\infty}\liminf_{k\rightarrow+\infty}\int_{B_R(0)}\dfrac{f(u_k(x_k+\mu_kx)}{f(M_k)}dx\\
	&=\lim_{R\rightarrow+\infty}\liminf_{k\rightarrow+\infty}\int_{B_{R\mu_k}(x_k)}f(u_k(y))dy.
	\end{split}
	\end{equation*} 
	
	We can quantify the constant $\theta$ as follows. Let us define the auxiliary function $w:=\beta v+(N-1)\log\beta$. Then, it is easy to show that $w$ satisfies $-\Delta_Nw=e^w$ and $\int_{\R^N}e^w<\infty$. Therefore, by \cite[Theorem 1.1]{Esp}, we have
	\begin{equation*}
	\theta=\int_{\R^N}e^{\beta v}=\frac1{\beta^{N-1}}\int_{\R^N}e^w=\frac{N\omega_N}{\beta^{N-1}}\left(\frac{N^2}{N-1}\right)^{N-1}.
	\end{equation*}
	\vskip0.2truecm
	In other words, if we apply the blow-up technique around a \textit{maximum point}, we see that we can characterize the limit profile and we get the concentration of the energy as in \eqref{stimaintegrale}-\eqref{theta}. This is indeed what happens around \textit{any blow-up point} of the sequence $(u_k)_k$, meaning points belonging to the set
	$$S:=\{y\in \bar \Omega\,|\,\exists (y_j)_j \subseteq \Omega\,|\, y_j\to y, u_{k_j}(y_j)\to+\infty \text{ as } j\to +\infty\}.$$ 
	\begin{lem}\label{lemmabrutto}
		{From the sequence $(u_k)_k$ one can extract a subsequence still denoted by $(u_k)_k$ for which the following holds.}\\
		There are $P\in\N$ and sequences $(x_{k,i})_k$ for $1\leq i\leq P$ such that $\lim_{k\to+\infty}u_k(x_{k,i})=+\infty$ for any $i$ and, setting
		$$v_{k,i}(x):=u_k(\xki+\muki x)-u_k(\xki),\qquad\muki:=(f(u_k(\xki)))^{-1/N},$$
		and $x^{(i)}=\lim_{k\to+\infty}x_{k,i}$, we have
		\begin{enumerate}[(i)]
			\item $\lim_{k\to+\infty}\frac{|\xki-x_{k,j}|}{\muki}=+\infty$ for $1\leq i\neq j\leq P$;
			\item $v_{k,i}\to v$ in $C^{1,\gamma}_{loc}(\R^N),$ for $1\leq i\leq P$, where $v$ is defined in \eqref{charact} and \eqref{stimaintegrale}-\eqref{theta} still hold;
			\item $\sup_k\inf_{1\leq i\leq P}|x-\xki|^Nf(u_k(x))\leq C$ for every $x\in\Omega$.
		\end{enumerate}
	\end{lem}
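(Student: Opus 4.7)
The plan is to argue by induction on the number $n$ of extracted bubbles, reusing at each step the rescaling-and-Liouville analysis already carried out around the maximum point $x_k$. The base case $n=1$ is immediate: take $(x_{k,1})_k:=(x_k)_k$ as in \eqref{M_k}; then (i) is vacuous and (ii) is exactly what has been established in \eqref{DeltaN_1}-\eqref{charact} together with the concentration/energy computation leading to \eqref{stimaintegrale}-\eqref{theta}. If (iii) then holds for some constant $C$, we stop with $P=1$; otherwise we extract a further blow-up sequence, passing to a subsequence at each iteration (the diagonal merging is harmless since $P$ will be bounded a priori).

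\textbf{Induction step.} Assume $(x_{k,1})_k,\dots,(x_{k,n})_k$ satisfy (i)-(ii) but that (iii) fails. Then, along a subsequence, $\sup_{x\in\overline\Omega}\Phi_k(x)\to+\infty$, where $\Phi_k(x):=\bigl(\min_{1\leq i\leq n}|x-x_{k,i}|\bigr)^Nf(u_k(x))$. By the boundary bound \eqref{bdry}, $\Phi_k$ is uniformly bounded on the strip $\Omega_r$, so the maximum $x_{k,n+1}$ of $\Phi_k$ over $\overline\Omega$ lies in the interior and $\Phi_k(x_{k,n+1})\to+\infty$. Setting $d_k:=\min_{i}|x_{k,n+1}-x_{k,i}|$ and $\mu_{k,n+1}:=f(u_k(x_{k,n+1}))^{-1/N}$, one gets (i) in the form $d_k/\mu_{k,n+1}=\Phi_k(x_{k,n+1})^{1/N}\to+\infty$; since $d_k$ is bounded by $\mbox{diam}(\Omega)$, this forces $f(u_k(x_{k,n+1}))\to+\infty$ and hence $u_k(x_{k,n+1})\to+\infty$ by (\textit{A2}), with $x_{k,n+1}\to x^{(n+1)}\in\Omega$ up to subsequence. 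The maximality of $\Phi_k$ at $x_{k,n+1}$ yields $f(u_k(x_{k,n+1}+\mu_{k,n+1}x))\leq 2^N f(u_k(x_{k,n+1}))$ on the expanding ball $B_{d_k/(2\mu_{k,n+1})}(0)$, so the right-hand side of the rescaled equation for $v_{k,n+1}$ is uniformly bounded in $L^\infty_{loc}(\R^N)$.

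\textbf{Convergence, concentration, termination.} The delicate step is extracting a nontrivial $C^{1,\gamma}_{loc}(\R^N)$ limit for $(v_{k,n+1})_k$: since $x_{k,n+1}$ need not be a global maximum of $u_k$, the simple choice $w_k=-v_k$ used in \eqref{Harnack_applied} is no longer available, and one must instead apply the weak Harnack inequality for $N$-superharmonic functions to a suitable translate of $v_{k,n+1}$, exploiting $v_{k,n+1}(0)=0$ together with the uniform $L^\infty$ control on the right-hand side just obtained, so as to produce the local $L^\infty$ bounds required by Lemma \ref{LocalEstimates_Serrin}. This is the main technical obstacle; once compactness is granted, the limit $\tilde v$ solves \eqref{Eq_R^N} on $\R^N$ with finite energy \eqref{energiafinita} and $\tilde v(0)=0$, hence by \cite[Theorem 1.1]{Esp} coincides with the profile $v$ of \eqref{charact}. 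This gives (ii), and the concentration \eqref{stimaintegrale}-\eqref{theta} around $x_{k,n+1}$ follows verbatim. Finally, (i) ensures that the concentration balls $B_{R\mu_{k,i}}(x_{k,i})$ are pairwise disjoint for $k$ large, so each extracted bubble contributes at least $\theta+o(1)$ to $\intOmega f(u_k)$; the global bound \eqref{RHS} thus forces the iteration to halt after at most $\lfloor\Lambda/\theta\rfloor$ steps, producing the required number $P\in\N$ of sequences satisfying (i)-(iii).
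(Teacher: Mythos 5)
Your overall scheme coincides with the paper's: whenever (iii) fails, extract a new blow-up sequence by maximizing $w_k(x)=\inf_{1\leq i\leq n}|x-x_{k,i}|^Nf(u_k(x))$, derive the separation from $w_k(y_k)^{1/N}=\inf_i|y_k-x_{k,i}|/\gamma_k\to+\infty$, use the maximality of $w_k$ at $y_k$ to bound the rescaled right-hand side in $L^\infty_{loc}$, and terminate because each bubble carries energy at least $\theta$ against the global bound \eqref{RHS}. However, the step you yourself label ``the main technical obstacle'' --- the uniform local $L^\infty$ bound on $v_{k,n+1}$ needed to invoke Lemma \ref{LocalEstimates_Serrin} --- is precisely the nontrivial content of the induction step, and your proposed resolution (a weak Harnack inequality applied to ``a suitable translate'' of $v_{k,n+1}$, exploiting $v_{k,n+1}(0)=0$) does not close it. Any Harnack-type inequality requires a sign condition, hence a uniform one-sided bound to translate by, and no such bound is available a priori: since $y_k$ need not be a maximum point of $u_k$, the rescaled function could in principle tend to $+\infty$ somewhere in $B_R(0)$. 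The paper's resolution of this point is not a PDE estimate at all: it uses the monotonicity of $f$ and the growth control \eqref{fexpbeta} coming from (\textit{A2})--(\textit{A3}) to show that the already-established bound $f(u_k(y_k+\gamma_kx))\leq Cf(u_k(y_k))$ forces a uniform \emph{upper} bound on $\tilde u_k$ over $B_R(0)$ (otherwise one would obtain $f(e^{t_k^2}+t_k)\leq Cf(t_k)$ with $t_k\to+\infty$, incompatible with superlinearity from below and the bound $e^{\gamma t}+D$ from above). Only after that is the Harnack inequality of Lemma \ref{Harnack_Serrin} applied to the nonnegative function $M(R)-\tilde u_k$, yielding the lower bound and hence the full $L^\infty$ estimate. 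Without an argument of this kind your induction step is incomplete.

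A second, smaller gap: statement (i) requires $|x_{k,i}-x_{k,j}|/\mu_{k,i}\to+\infty$ for \emph{all} ordered pairs, so besides $d_k/\mu_{k,n+1}\to+\infty$ you must also exclude $|y_k-x_{k,i}|=O(\mu_{k,i})$ for $i\leq n$, i.e.\ that the new point sits at one of the old points' scales. The paper handles this by contradiction: writing $y_k=x_{k,i}+\mu_{k,i}\theta_{k,i}$ with $\theta_{k,i}$ bounded and using the known limit profile $v$ of the $i$-th bubble, one finds that $w_k(y_k)$ stays bounded, contradicting $w_k(y_k)\to+\infty$. This should be added to your induction step.
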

	\begin{proof}
		The proof is inspired by \cite[Claims 5-7]{RobWei}. We say that the \textit{property} $\mathcal{H}_p$ holds whenever there exist $p$ sequences $(x_{k,i})_k$ for $1\leq i\leq p$ such that $u_k(x_{k,i})\to+\infty$ as $k\to+\infty$ and such that (\textit{i}) and (\textit{ii}) hold. From our previous investigation, we know that $\mathcal{H}_1$ holds. Our aim is to prove that the number of such sequences is finite by showing the following: if $\mathcal{H}_p$ holds then either $\mathcal{H}_{p+1}$ holds too or there exists a constant $C>0$ such that (\textit{iii}) holds. Indeed, if so, supposing by contradiction that $\mathcal{H}_p$ holds for any $p$, then we would be able to find a sequence of disjoint balls $B_{R\muki}(\xki)$ such that \eqref{stimaintegrale} holds, and thus by Lemma \ref{Prop_boundary&energy},
		\begin{equation*}
		\Lambda\geq\intOmega f(u_k(x))dx\geq\int_{\bigcup_{i=1}^p B_{R\muki}(\xki)} f(u_k(x))dx=\sum_{i=1}^p\int_{B_{R\muki}(\xki)} f(u_k(x))dx\geq p\theta,
		\end{equation*}
		an upper bound for $p$, a contradiction. As a consequence, $\mathcal{H}_p$ does not hold for any $p$ and, by the claimed alternative, the proof is completed.\\
		\indent Now the claim has to be proved. Suppose that $\mathcal{H}_p$ for some $p\in\N$ but not (\textit{iii}). Hence, defining $w_k(x):=\inf_{1\leq i\leq p}|x-\xki|^Nf(u_k(x))$, one can find a sequence of points $(y_k)_k$ such that $w_k(y_k)=\|w_k\|_\infty\nearrow+\infty$. We show that these points $(y_k)_k$, together with $(x_{k,i})_k$ for $1\leq i\leq P$ let $\mathcal{H}_{p+1}$ holds. First $u_k(y_k)\to+\infty$, since
		$$+\infty\leftarrow w_k(y_k)\leq (diam(\Omega))^Nf(u_k(y_k))$$
		and by the local boundedness of $f$ on $[0,+\infty)$. Moreover, defining
		$$\tilde u_k(x):=u_k(y_k+\gamma_k x)-u_k(y_k),\qquad\gamma_k:=(f(u_k(y_k)))^{-1/N},$$
		first we have
		\begin{equation}\label{i_1}
		\inf_{1\leq i\leq p}\frac{|y_k-x_{k,i}|}{\gamma_k}=\inf_{1\leq i\leq p}|y_k-x_{k,i}|f(u_k(y_k))^{1/N}=w_k(y_k)^{1/N}\to+\infty.
		\end{equation}
		Then, suppose by contradiction that $|y_k-x_{k,i}|=O(\muki)$, that is $y_k=\xki+\muki\theta_{k,i}$, with $|\theta_{k,i}|\leq C$. We have
		\begin{equation*}
		\begin{split}
		w_k(y_k)&\leq|y_k-\xki|^Nf(u_k(y_k))=\muki^N|\theta_{k,i}|^Nf(u_k(y_k))=|\theta_{k,i}|^N\frac{f(u_k(y_k))}{f(u_k(\xki))}\\
		&=|\theta_{k,i}|^Ne^{\frac{f'(z_{k,i}(\theta_{k,i}))}{f(z_{k,i}(\theta_{k,i}))}v_{k,i}(\theta_{k,i})}\to|\theta_{\infty,i}|^Ne^{\beta v(\theta_{\infty,i})}=\frac{|\theta_{\infty,i}|^N}{\big(1+\beta {C_N}^\frac1{1-N}|\theta_{\infty,i}|^{\frac N{N-1}}\big)^N}<+\infty,
		\end{split}
		\end{equation*}
		a contradiction, so (\textit{i}) is proved.\\
		\indent Now, it remains to prove that $u_k$ has the same blow-up behaviour also for the sequence $(y_k)_k$. To this aim, we first show that, for any $R>0$
		\begin{equation}\label{ii_1}
		\frac{f(u_k(y_k+\gamma_kx))}{f(u_k(y_k))}\leq C
		\end{equation}
		for some $C=C(R)>0$ and for any $x\in B_R(0)$. Notice that this is not obvious as in \eqref{DeltaN_1}, since $(y_k)_k$ do not have to be necessarily maximum points for $(u_k)_k$. Rewriting in terms of the sequence $(u_k)_k$ the inequality $w_k(y_k+\gamma_kx)\leq w_k(y_k)$, which holds for any $x\in B_R(0)$ for $k$ large enough, one finds
		\begin{equation}\label{ii_2}	
		\frac{f(u_k(y_k+\gamma_kx))}{f(u_k(y_k))}\leq \bigg(\frac{\inf_{1\leq i\leq p}|y_k-\xki|}{\inf_{1\leq i\leq p}|y_k+\gamma_kx-\xki|}\bigg)^N.
		\end{equation}
		Fix now $\varepsilon\in(0,1)$. By \eqref{i_1}, there exists $\bar k=\bar k(R,\varepsilon)>0$ such that for any $k>\bar k$ we have $R\gamma_k\leq\varepsilon
		|y_k-\xki|$. Therefore $|y_k+\gamma_kx-\xki|\geq|y_k-\xki|-\gamma_kR\geq(1-\varepsilon)|y_k-\xki|$ and so \eqref{ii_1} follows from \eqref{ii_2} with $C=(1-\varepsilon)^{-N}$.\\
		In order to complete the proof of the local compactness of the sequence $(\tilde u_k)_k$ and consequently get (\textit{ii}), we need to have its local boundedness in the $L^N$ norm according to Lemma \ref{LocalEstimates_Serrin}. This time, unlike \eqref{Harnack_applied}, before applying the Harnack inequality of Lemma \ref{Harnack_Serrin}, we need to know that our sequence is uniformly bounded from above, which is not immediate. So, suppose by contradiction that for any sequence of positive constants $(C_k)_k\nearrow+\infty$ there exist points $(\tilde x_k)_k\subset B_R(0)$ such that $\tilde u_k(\tilde x_k)\geq C_k$ holds, thus, since $f$ is increasing,
		\begin{equation*}
		f(u_k(y_k+\gamma_k\tilde x_k))\geq f(C_k+u_k(y_k)).
		\end{equation*}
		Together with \eqref{ii_1}, this implies
		\begin{equation}\label{iii_3}
		f(C_k+u_k(y_k))\leq Cf(u_k(y_k)).
		\end{equation}
		Choosing now $C_k=e^{u_k(y_k)^2}$ and setting $t_k:=u_k(y_k)$, \eqref{iii_3} rereads as
		\begin{equation}\label{iii_4}
		f(e^{t_k^2}+t_k)\leq Cf(t_k).
		\end{equation}
		Then, by superlinearity of $f$ and by the fact that $f(s)\leq e^{\gamma s}+D$ for some $\gamma,D>0$ for any $s\in\R^+$ by (\textit{A3}), we have
		\begin{equation*}
		e^{t_k^2}+t_k-\bar C\leq f(e^{t_k^2}+t_k)\leq Cf(t_k)\leq e^{\gamma t_k}+D,
		\end{equation*}
		which yields a contradiction since $t_k\nearrow+\infty$. Hence, $M(R):=\max_{B_R(0)}\tilde u_k\in[0,+\infty)$ and we can consider the function $U_k:=M(R)-\tilde u_k$. We have $U_k\geq 0$, $\min_{B_R(0)}U_k=0$ and furthermore $U_k$ satisfies
		$$-\Delta_NU_k=-\frac{f(u_k(y_k+\gamma_kx))}{f(u_k(y_k))}\in[-C(R),0],$$
		where the bound from below follows from \eqref{ii_1}.
		Therefore, by Lemma \ref{Harnack_Serrin}, we get
		\begin{equation*}
		\big|M(R)+\min_{B_R(0)}\tilde u_k\big|=\max_{B_R(0)}|U_k|=\max_{B_R(0)}U_k\leq C(R)\big[\min_{B_R(0)}U_k+K(R)\big]=C(R)K(R),
		\end{equation*}
		which in turn implies
		\begin{equation}\label{eq_Harnack_tilde}
		\bigg|\max_{B_R(0)}\tilde u_k-\big|\min_{B_R(0)}\tilde u_k\big|\bigg|\leq \tilde C(R).
		\end{equation}
		Therefore, either $\max_{B_R(0)}\tilde u_k\geq\big|\min_{B_R(0)}\tilde u_k\big|$, and in this case we easily infer $\|\tilde u_k\|_{L^\infty(B_R(0))}\leq C(R)$, or we deduce from \eqref{eq_Harnack_tilde} that
		\begin{equation*}
		\big|\min_{B_R(0)}\tilde u_k\big|-\max_{B_R(0)}\tilde u_k\leq \tilde C(R).
		\end{equation*}
		This readily implies $\min_{B_R(0)}\tilde u_k\geq -C(R)$, so again we find $\|\tilde u_k\|_{L^\infty(B_R(0))}\leq C(R)$.\\
		\noindent Applying Lemma \ref{LocalEstimates_Serrin}, we infer that $(\tilde u_k)_k$ is locally compact in $C^{1,\alpha}_{loc}(\R^N)$ and, with similar steps as in \eqref{DeltaN_2}-\eqref{charact}, we finally infer (\textit{ii}). Consequently, the property $\mathcal{H}_p$ is satisfied and the proof is concluded.
	\end{proof}

	\begin{cor}\label{Sfinite}
		The set of blow-up points is finite.
	\end{cor}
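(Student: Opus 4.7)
The plan is to show that $S\subseteq\{x^{(1)},\dots,x^{(P)}\}$ with $x^{(i)}=\lim_k x_{k,i}$ provided by Lemma~\ref{lemmabrutto}; since this latter set is finite, the corollary follows immediately. The only ingredient required is property (\textit{iii}) of that lemma, which uniformly controls $f(u_k)$ away from the concentration sequences.

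Fix an arbitrary $y\in S$ and let $(u_{k_j})_j$ be a subsequence and $(y_j)_j\subseteq\Omega$ with $y_j\to y$ and $u_{k_j}(y_j)\to+\infty$. Assumption (\textit{A2}) ensures that $f(u_{k_j}(y_j))\to+\infty$, so evaluating property (\textit{iii}) of Lemma~\ref{lemmabrutto} at the points $y_j$ gives
\begin{equation*}
\inf_{1\leq i\leq P}|y_j-x_{k_j,i}|^N\leq\frac{C}{f(u_{k_j}(y_j))}\longrightarrow 0\qquad\text{as }j\to+\infty.
\end{equation*}
For each $j$, choose an index $i_j\in\{1,\dots,P\}$ achieving the infimum; since the index set is finite, a pigeonhole argument yields a further subsequence along which $i_j\equiv i_\star$ for some fixed $i_\star$. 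Hence $|y_j-x_{k_j,i_\star}|\to 0$, and combining this with $y_j\to y$, $x_{k_j,i_\star}\to x^{(i_\star)}$ and the triangle inequality gives $y=x^{(i_\star)}$.

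No serious obstacle arises here, as the corollary is essentially a repackaging of Lemma~\ref{lemmabrutto}(\textit{iii}). The only mildly subtle point worth recording is the implication $u_{k_j}(y_j)\to+\infty\;\Rightarrow\;f(u_{k_j}(y_j))\to+\infty$, which is guaranteed by the superlinear growth imposed in (\textit{A2}) together with the continuity of $f$ furnished by (\textit{A1}).
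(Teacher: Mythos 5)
Your proof is correct and uses the same key ingredient as the paper, namely property (\textit{iii}) of Lemma \ref{lemmabrutto} combined with the unboundedness of $f$ at infinity guaranteed by (\textit{A2}); you argue directly (every $y\in S$ must coincide with some $x^{(i_\star)}$ via a pigeonhole extraction), whereas the paper runs the contrapositive as a proof by contradiction for a point $\bar x\notin\{x^{(i)}\}$. The two arguments are essentially identical.
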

	\begin{proof}
		We show the finiteness of $S$ by proving that $S=\{x^{(i)},\,1\leq i\leq P\}$, where the points $x^{(i)}$ are defined as in Lemma \ref{lemmabrutto}. Indeed, let $\bar x\not\in \{x^{(i)},\,1\leq i\leq P\}$ for which one can find a sequence $\bar x_k\to\bar x$ such that, up to a subsequence, $u_k(\bar x_k)\to+\infty$. We have $\inf_{1\leq i\leq P,\, k\in\N}|\bar x_k-x^{(i)}|\geq \bar d>0$ and, by Proposition \ref{Prop_boundary&energy}, $d(\bar x_k,\dOmega)\geq\eta>0$ for some constants $\bar d,\eta$. Then, by \textit{(iii)} of Lemma \ref{lemmabrutto} and the superlinearity of $f$, we infer
		$$C\geq \inf_{1\leq i\leq P}|\bar x_k-\xki|^Nf(u_k(\bar x_k))\geq\frac{\bar d^N}{2}(u_k(\bar x_k)-C),$$
		a contradiction.
	\end{proof}
	
	Now we are in the position to prove Theorem \ref{thm}. In broad terms, we will show that the amount of the energy concentrating at blow-up points found in \eqref{stimaintegrale}-\eqref{theta} is too large to have at the same time the upper-bound of Proposition \ref{Prop_boundary&energy}.
	
	\begin{proof}[Proof of Theorem \ref{thm}]
		By \eqref{M_k} $S$ is nonempty. Let $x_0\in S$ and $r$ small such that $S\cap B_r(x_0)=\{x_0\}$. In this way, $\|u_k\|_{L^\infty(K)}<C$ for any $K\subset B_r(x_0)\setminus\{x_0\}$ and thus $u_k\to \tilde u$ in $C^{1,\alpha}_{loc}(B_r(x_0)\setminus\{x_0\})$ by Lemma \ref{LocalEstimates_Serrin}. Moreover, by the energy bound of Proposition \ref{Prop_boundary&energy}, the sequence $(f_k)_k:=(f(u_k))_k$ is bounded in $L^1$, thus $f_k\rightharpoonup\mu$ in $\mathcal M(B_r(x_0))\cap L^\infty_{loc}(B_r(x_0)\setminus\{x_0\})$ where $\mu$ is a positive measure which is singular at $x_0$. Hence, we can decompose $\mu$ as
		\begin{equation}\label{dec_mu}
		\mu=A(x)dx+a_0\delta_{x_0},
		\end{equation}
		where $0\leq A\in L^1(B_r(x_0))$, $a_0$ is a positive constant and $\delta_{x_0}$ denotes the Dirac distribution centered at $x_0$. We first claim that $a_0\geq\theta$. Indeed, for any $t\in(0,r)$ we have
		\begin{equation}\label{a_0>theta}
		\int_{B_t(x_0)}d\mu=\lim_{k\to+\infty}\int_{B_t(x_0)}f(u_k(x))dx\geq\liminf_{k\to+\infty}\int_{B_{R\mu_k}(x_k)}f(u_k(x))dx\geq\theta
		\end{equation}
		by \eqref{stimaintegrale}, where here $x_k$ is a blow-up sequence converging to $x_0$. As $t$ is arbitrary, we thus find $a_0\geq\theta$.
		
		Let now $w$ be the distributional solution of
		\begin{equation}\label{DIRdelta}
		\begin{cases}
		-\Delta_Nw=a_0\delta_{x_0}\quad&\mbox{in }B_r(x_0),\\%
		w=0\quad&\mbox{on }\partial B_r(x_0).
		\end{cases}
		\end{equation}
		Then by \cite[Theorem 2.1]{KV} $w\in C^{1,\alpha}(B_r(x_0)\setminus\{x_0\})$ and has the explicit form
		\begin{equation*}
		w(x)=\left(\frac{a_0}{N\omega_N}\right)^{\frac1{N-1}}\log\left(\frac r{|x-x_0|}\right).
		\end{equation*}
		We claim that
		\begin{equation}\label{wu}
		w\leq \tilde u\quad\;\mbox{in}\; B_r(x_0).
		\end{equation}
		If so, choosing $\varepsilon\in(0,\frac\beta N)$, in view of \eqref{fexpbeta} on the one hand by Proposition \ref{Prop_boundary&energy} we get
		\begin{equation*}
		\begin{split}
		\int_{B_r(x_0)}e^{(\beta-\varepsilon)w}&\leq \int_{B_r(x_0)}e^{(\beta-\varepsilon)\tilde u}\leq\liminf_{k\to+\infty}\int_{B_r(x_0)}e^{(\beta-\varepsilon)u_k}\\
		&\leq c\liminf_{k\to+\infty}\int_{B_r(x_0)}f(u_k)+d\leq C(\Lambda).
		\end{split}		
		\end{equation*}
		On the other hand, by means of the explicit expressions for $w$ and $\theta$:
		\begin{equation*}
		\begin{split}
		\int_{B_r(x_0)}e^{(\beta-\varepsilon)w}&=\int_{B_r(x_0)}\left(\frac r{|x-x_0|}\right)^{(\beta-\varepsilon)\left(\frac{a_0}{N\omega_N}\right)^{\frac1{N-1}}}dx\\
		&\geq\int_{B_r(x_0)}\left(\frac r{|x-x_0|}\right)^{(\beta-\varepsilon)\left(\frac\theta{N\omega_N}\right)^{\frac1{N-1}}}dx\\
		&\geq\int_{B_r(x_0)}\left(\frac r{|x-x_0|}\right)^{\frac{(\beta-\varepsilon)}{\beta}\left(\frac{N^2}{N-1}\right)}dx\\
		&>\int_{B_r(x_0)}\left(\frac r{|x-x_0|}\right)^Ndx=+\infty,
		\end{split}
		\end{equation*}
		a contradiction. Therefore the proof of Theorem \ref{thm} is completed once we prove \eqref{wu}. Here, we follow the strategy in \cite{LRU}. For any $j\in\N$, let us define the maps $B_j:\overline\R\to\R^+$ by
		\begin{equation*}
		B_j(s)=\begin{cases}
			0\quad&\mbox{for }s<0,\\%
			s\quad&\mbox{for }0\leq s\leq j,\\
			j\quad&\mbox{for }s>j,
		\end{cases}
		\end{equation*}
		and the functions $z^{(j)}_k:=B_j(w-u_k)\geq 0$. Then, it is easy to see that $z^{(j)}_k(x_0)=j$ for any $k$, that  ${z^{(j)}_k}_{|_{\partial B_r(x_0)}}=0$ and, moreover, $z^{(j)}_k\in W^{1,N}_0(B_R(x_0))$. Furthermore, as $B_j$ is continuous, one has $z^{(j)}_k\to z^{(j)}$ pointwise, where 
		\begin{equation*}
		z^{(j)}(s)=\begin{cases}
		B_j(w-\tilde u)\quad&\mbox{for }x\not=x_0,\\
		j\quad&\mbox{for }x=x_0.
		\end{cases}
		\end{equation*}
		Notice that $z^{(j)}_k$ may be extended to 0 in $\Omega\setminus B_r(x_0)$. Since $w$ and $u_k$ solve respectively problems \eqref{DIRdelta} and \eqref{DIRf}, then
		\begin{equation*}
		\begin{split}
		\int_{B_r(x_0)}\left(|\nabla w|^{N-2}\nabla w - |\nabla\tilde u|^{N-2}\nabla\tilde u\right)\nabla z^{(j)}_k&=a_0 z^{(j)}_k(x_0)-\intOmega f(u_k)z^{(j)}_k\\
		&=a_0 j-\int_{B_r(x_0)} f(u_k)z^{(j)}_k.
		\end{split}
		\end{equation*}
		Recalling $f(u_k)\rightharpoonup \mu$ and \eqref{dec_mu}, using the generalized Fatou's Lemma with measures (see e.g. \cite[Chapter 11, Proposition 17]{Royden}), one infers
		$$\liminf_{k\to+\infty}\int_{B_r(x_0)}f(u_k)z^{(j)}_kdx\geq\int_{B_r(x_0)}z^{(j)}d\mu\geq\int_{\{x_0\}}z^{(j)}d\mu\geq a_0 j,$$
		and thus
		\begin{equation*}
		\int_{B_r(x_0)}\left(|\nabla w|^{N-2}\nabla w - |\nabla\tilde u|^{N-2}\nabla\tilde u\right)\nabla z^{(j)}\leq 0,
		\end{equation*}
		that is,
		\begin{equation*}%\label{ineq_N}
		\int_{B_r(x_0)\cap\{0\leq w-\tilde u\leq j\}}\left(|\nabla w|^{N-2}\nabla w - |\nabla\tilde u|^{N-2}\nabla\tilde u\right)\nabla\left(w-\tilde u\right)\leq 0,
		\end{equation*}
		which holds for any choice of $j\in\N$. Then, the well-known inequality
		$$d_N|X-Y|^N\leq\langle|X|^{N-2}X-|Y|^{N-2}Y,X-Y\rangle,$$
		for any $X\not=Y\in\R^N$, the constant $d_N>0$ depending only on $N$ (see  \cite[Proposition 4.6]{RenWei}), implies
		\begin{equation*}
		d_N\int_{B_r(x_0)}|\nabla (w-\tilde u)^+|^N\leq 0,
		\end{equation*}
		which finally proves our claim $w-\tilde u\leq 0$ in $B_r(x_0)$, since $\left(w-\tilde u\right)^+\leq 0$ on $\partial B_r(x_0)$.
	\end{proof}

	\section{Generalization to nonhomogeneous problems}\label{SectionGen}
	So far, we studied the homogeneous quasilinear problem \eqref{DIRf}, which allows a clearer exposition and a more direct comparison with the results in \cite{LRU,DP}. However, a similar analysis can be carried out also in the \textit{nonhomogeneous} setting, provided some conditions at infinity and near the boundary are fulfilled. More precisely, we may consider the problem
	\begin{equation}\label{DIRh}
	\begin{cases}
	-\Delta_Nu=h(x,u)\quad&\mbox{in }\Omega,\\%
	u=0\quad&\mbox{on }\dOmega,
	\end{cases}
	\end{equation}
	where $h:\Omegabar\times\R\to\R^+\cup\{0\}$ is a Carath\'{e}odory function satisfying the following conditions:
	\begin{enumerate}[H1)]
		\item $h\in L^\infty(\Omega\times[0,\tau])$ for all $\tau\in\R^+$ and $h(x,t)>0$ for any $x\in\Omega$ and $t>0$;
		\item there exist $f\in C^1([0,+\infty))$ satisfying (\textit{A1})-(\textit{A3}) and $0<a\in L^\infty(\Omega)\cap C(\Omega)$ such that $$\lim_{t\rightarrow+\infty}\frac{h(x,t)}{a(x)f(t)}=1\qquad\mbox{uniformly in}\;\Omega\,;$$
		\item there exist $\bar r,\bar\delta>0$ such that
		\begin{itemize}
			\item $h(\cdot, t)\in C^1(\Omega_{\bar r})$ for all $t\geq 0$ and $\frac{\partial h}{\partial t}(x,t)\geq 0$ in $\Omega_{\bar r}\times\R^+$;
			\item $\nabla_xh(x,t)\cdot\theta\leq 0$ for all $x\in\Omega_r$, $t\geq0$ and unit vectors $\theta$ such that $|\theta-n(x)|\leq\bar\delta$.
		\end{itemize}
	\end{enumerate}
	We recall $\Omega_r:=\{x\in\Omega\,|\,d(x,\dOmega)\leq r\}$.
	\begin{remark}
		In (\textit{H2}) we assume $a>0$ only in the interior of $\Omega$, but it may vanish on the boundary. Moreover, assumption (\textit{H3}) is imposed to uniformly control solutions near the boundary by a moving-planes technique, prescribing in broad terms that $h$ should be decreasing in $x$ along suitable outer directions and increasing in $t$ in a suitable neighborhood of $\dOmega$.
	\end{remark}
	\begin{remark}
		From (\textit{H1})-(\textit{H2}) it follows that for each $\varepsilon>0$ there exists a constant $d_\varepsilon\geq0$ such that
		\begin{equation}\label{asymptotic_GROWTH}
		(1-\varepsilon)a(x)f(t)-d_\varepsilon\leq h(x,t)\leq (1+\varepsilon)a(x)f(t)+d_\varepsilon\quad\;\mbox{for all \,$t\geq0$, $x\in\Omega$}.
		\end{equation}
	\end{remark}
	\begin{thm}\label{thmX}
		Let $\Omega\subset\R^N$, $N\geq 2$, be a bounded strictly convex domain of class $C^{1,\gamma}$ for some $\gamma\in(0,1)$ and let $h$ satisfy (\textit{H1})-(\textit{H3}).
		Then, there exists a constant $C>0$ such that
		\begin{equation*}
		\|u\|_{L^\infty(\Omega)}\leq C
		\end{equation*}
		for all weak solutions $u\in W^{1,N}_0(\Omega)$ of problem \eqref{DIRf}.
	\end{thm}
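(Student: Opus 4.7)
The plan is to adapt the blow-up scheme of Section \ref{SectionBlowup}, with the $x$-dependence of the right-hand side controlled via assumptions (\textit{H1})-(\textit{H3}). First I would establish the nonhomogeneous counterpart of Proposition \ref{Prop_boundary&energy}: uniform bounds $u,|\nabla u|\leq C_0$ on a tubular neighborhood of $\dOmega$, together with $\int_\Omega h(x,u)\,dx\leq\Lambda$. The boundary estimate rests on a moving-planes argument in $\Omega_{\bar r}$, for which assumption (\textit{H3}) provides precisely the monotonicity in outer directions and in $u$ that the Picone-type comparison of \cite{AH,DP,LRU} requires; the integral bound then follows by testing the equation against a cut-off supported near $\dOmega$, as in \cite{LRU}.

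Next, supposing by contradiction the existence of an unbounded sequence of solutions with maxima $(x_k)$ and $M_k:=u_k(x_k)\nearrow+\infty$, the boundary estimate forces $x_k\to x_\infty\in\Omega$, and $a(x_\infty)>0$ by (\textit{H2}). I would choose the rescaling
\begin{equation*}
v_k(x):=u_k(x_k+\mu_k x)-M_k,\qquad \mu_k:=(a(x_k)\,f(M_k))^{-1/N},
\end{equation*}
well-defined for $k$ large, so that the rescaled equation reads
\begin{equation*}
-\Delta_N v_k(x)=\frac{h(x_k+\mu_k x,\,u_k(x_k+\mu_k x))}{a(x_k)f(M_k)}.
\end{equation*}
Splitting the right-hand side as the product of $h/(af)$ evaluated at $(x_k+\mu_k x,u_k(x_k+\mu_k x))$ (which tends to $1$ uniformly by (\textit{H2})), of $a(x_k+\mu_k x)/a(x_k)$ (tending to $1$ by continuity of $a$ at $x_\infty$), and of $f(u_k(x_k+\mu_k x))/f(M_k)$, the third factor is handled exactly as in Section \ref{SectionBlowup} via a Taylor expansion of $\log\circ f$. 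Since the right-hand side is uniformly bounded, Harnack (Lemma \ref{Harnack_Serrin}) and the local regularity of Lemma \ref{LocalEstimates_Serrin} apply verbatim, yielding $v_k\to v$ in $C^{1,\alpha}_{loc}(\R^N)$ with $-\Delta_N v=e^{\beta v}$ and $\int_{\R^N}e^{\beta v}<+\infty$, so the Esposito classification \eqref{charact} holds.

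From here the argument proceeds almost unchanged. The analogues of Lemma \ref{lemmabrutto} and Corollary \ref{Sfinite} produce a finite blow-up set $S$, and by the normalizing choice of $\mu_k$ the energy concentration becomes $\int_{B_{R\mu_k}(x_k)}h(x,u_k)\,dx\to\theta$, so that the mass $a_0$ of the weak-$\ast$ limit $\mu$ of $h(\cdot,u_k)$ at each $x_0\in S$ satisfies $a_0\geq\theta$, independently of $a(x_0)$. The comparison with the distributional solution $w$ of $-\Delta_N w=a_0\delta_{x_0}$ on a small ball $B_r(x_0)$ transfers word-for-word, the truncation test-functions $z^{(j)}_k$ being insensitive to the $x$-dependence. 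For the final contradiction I would use \eqref{asymptotic_GROWTH} together with the continuity lower bound $a(x)\geq a(x_0)/2>0$ near $x_0$ to write
\begin{equation*}
a(x_0)\int_{B_r(x_0)}e^{(\beta-\varepsilon)u_k}\,dx\leq C\int_{B_r(x_0)}h(x,u_k)\,dx+C'\leq C\Lambda+C'',
\end{equation*}
thus obtaining $\int e^{(\beta-\varepsilon)\tilde u}<+\infty$ and hence, via $w\leq\tilde u=\lim u_k$, also $\int e^{(\beta-\varepsilon)w}<+\infty$; but the explicit form of $w$ combined with $a_0\geq\theta$ makes this last integral diverge whenever $\varepsilon\in(0,\beta/N)$, exactly as in the homogeneous case.

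The main obstacle is verifying that (\textit{H3}) is strong enough to run the moving-planes step in the quasilinear boundary estimate; the blow-up analysis itself is then a rather mechanical adaptation, with the $x$-dependence cleanly absorbed into the factor $a(x_k)$ of the rescaling constant and the nondegeneracy $a(x_\infty)>0$ keeping every limit object in Section \ref{SectionBlowup} well-defined.
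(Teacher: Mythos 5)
Your proposal is correct and follows essentially the same route as the paper: moving planes near the boundary via (\textit{H3}) and the weak comparison principle, then the blow-up/concentration/singular-comparison machinery of Section \ref{SectionBlowup} with the $x$-dependence absorbed through (\textit{H2}) and the continuity of $a$ at the interior limit point. The only (cosmetic) difference is your inclusion of $a(x_k)^{-1/N}$ in the rescaling parameter, which yields the limit equation $-\Delta_N v=e^{\beta v}$ directly, whereas the paper keeps $\mu_k=f(M_k)^{-1/N}$ and obtains $-\Delta_N v=a(x_\infty)e^{\beta v}$; both normalizations give the same concentration constant $\theta$ and the same final contradiction.
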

	\vskip0.2truecm
	The proof of Theorem \ref{thmX} mainly follows the arguments of the previous sections, so we sketch here only the remarkable modifications.
	\begin{proof}
		First, we want to prevent blow-up near the boundary, so we adapt in our context the results of Proposition \ref{Prop_boundary&energy}. With the same argument as in \cite[Proposition 2.1]{LRU} and taking \eqref{asymptotic_GROWTH} into account, one infers that solutions are bounded in $L^d_{loc}(\Omega)$, namely
		\begin{equation}\label{L1_loc}
		\intOmega u^d\Phi_1^N\leq C,
		\end{equation}
		where $d$ is defined in \textit{(A2)}, $C$ is independent of $u$ and $\Phi_1\in W^{1,N}_0(\Omega)$ is the first (positive) eigenfunction of $-\Delta_N$ on $\Omega$. In order to prove that \eqref{L1_loc} yields a uniform bound near $\dOmega$, we have to show that here our solutions are there decreasing with respect to outer directions. We apply a moving-planes technique in the spirit of \cite[Lemma 3.2]{dFdOR}. Let us first fix some notation. For any direction $\nu$, set
		\begin{equation*}
		T_\lambda^\nu:=\{x\in\R^N\,|\,x\cdot\nu=\lambda\}\;\quad\mbox{and}\;\quad\Omega_\lambda^\nu:=\{x\in\Omega\,|\,x\cdot\nu<\lambda\},
		\end{equation*}
		the latter being nonempty for $\lambda>a(\nu):=\inf_{x\in\Omega}x\cdot\nu$. Moreover, for any $x\in\Omega$, we denote by $x_\lambda^\nu$ the symmetric point with respect to the hyperplane $T_\lambda^\nu$, namely $$x_\lambda^\nu=R_\lambda^\nu(x):=x+2(\lambda-x\cdot\nu)\nu.$$ Similarly, we define $u_\lambda^\nu:=R_\lambda^\nu(u)$, which is well-defined on $(\Omega_\lambda^\nu)':=R_\lambda^\nu(\Omega_\lambda^\nu)$. Our aim is to compare $u$ and $u_\lambda^\nu$ near the boundary in the case $\nu=n(x)$ (the outer normal) and $\lambda-a(\nu)$ small, in order to conclude that $u$ is decreasing along these directions in a small neighborhood of $\dOmega$.
		First notice that by convexity of $\Omega$, as long as $\lambda-a(\nu)$ is small, we have $(\Omega_\lambda^\nu)'\subset\Omega$ and $\Omega_\lambda^\nu\cup (\Omega_\lambda^\nu)'\subset\Omega_{\bar r}$. Therefore, for such $\lambda$, in $\Omega_\lambda^\nu$ one has
		\begin{equation*}
		\begin{split}
		-\Delta_N u_\lambda^\nu (x)-\big(-\Delta_N u(x)\big)&=h(x_\lambda^\nu,u_\lambda^\nu)-h(x,u)\stackrel{(H3)}{\geq} h(x,u_\lambda^\nu)-h(x,u)\\
		&=M(x)\big(u_\lambda^\nu-u\big),
		\end{split}
		\end{equation*}
		where
		$$M(x)=\frac{\partial h}{\partial t}(x,\eta(x,\lambda))\stackrel{(H3)}{\geq} 0$$
		for some real number $\eta(x,\lambda)$ lying between $u_\lambda^\nu(x)$ and $u(x)$ by the mean value theorem, hence $M\in L^\infty(\Omega_\lambda^\nu)$. Therefore, $u_\lambda^\nu$ et $u$ satisfy
		\begin{equation*}
		\begin{cases}
		-\Delta_Nu_\lambda^\nu-M(x)u_\lambda^\nu\geq -\Delta_Nu-M(x)u\quad&\mbox{in }\Omega_\lambda^\nu,\\%
		u_\lambda^\nu\geq u\quad&\mbox{on }\dOmega_\lambda^\nu,
		\end{cases}
		\end{equation*}
		and the weak comparison principle in small domains \cite[Theorem 1.3]{DS1} yields $u_\lambda^\nu\geq u$ in $\Omega_\lambda^\nu$. We point out that the mentioned result is originally stated for $M$ positive constant and for homogeneous nonlinearities, but it can be easily adapted to our setting (here assumption (\textit{H1}) is required). By arbitrariness of $\lambda$ and $x\in\Omega_{\bar r}$, one deduces that $u$ is decreasing along the outer normal direction and then, by a compactness argument, that there exist $\delta\in(0,\bar \delta]$ and $r\in(0,\bar r]$ independent of $u$ such that $\nabla u(x)\cdot\theta\leq 0$ for every $x\in\Omega_r$ and for every direction $\theta$ such that $|\theta-n(x)|<\delta$. The boundedness of $u$ and $\nabla u$ in a neighborhood of the boundary now follows by a standard argument by \cite{dFLN}, and moreover one infers
		\begin{equation*}%\label{RHS_H}
		\int_\Omega h(x,u)dx\leq\Lambda,
		\end{equation*}
		with $\Lambda$ independent of $u$. We refer to \cite[Propositions 2.1-3.1]{LRU} for the details.
		\vskip0.2truecm
		\indent Let us now address to the blow-up analysis. We shall see that the argument carried out in Section \ref{SectionBlowup} applies to the problem \eqref{DIRh} with only minor modifications. In particular, the rescaled functions $v_k$ defined in \eqref{vk}-\eqref{muk} turn out to be weak solutions of
		\begin{equation*}%\label{DeltaN_1X}
		-\Delta_N v_k(x)=\frac{h(x_k+\mu_kx,u_k(x_k+\mu_kx))}{f(M_k)},\qquad x\in\Omega_k:=\frac{\Omega-x_k}{\mu_k},
		\end{equation*}
		and one shows that $v_k\to v$ in $C^{1,\alpha}_{loc}(\R^N)$ which satisfies
		\begin{equation*}%\label{Eq_R^NX}
		-\Delta_N v=a(x_\infty)e^{\beta v}\quad\,\mbox{in}\;\R^N.
		\end{equation*}
		We recall that $d(x_\infty,\dOmega)>0$, as we have already excluded boundary blow-up, thus $a(x_\infty)>0$ by (\textit{H2}).\\
		\indent In the subcritical case, the same argument as in Section \ref{Subcritical} holds and, in the critical framework, Lemma \ref{lemmabrutto} easily adapts, showing that near blow-up points the energy concentrates:
		\begin{equation*}%\label{stimaintegraleX}
		\lim_{R\rightarrow+\infty}\liminf_{k\rightarrow+\infty}\int_{B_{R\mu_k}(x_k)}h(y,u_k(y))dy\geq\theta>0,
		\end{equation*}
		with the same constant $\theta$ defined in \eqref{theta}. Then, the conclusion of the proof is analogous to the one for the homogeneous case.
	\end{proof}
	
	\section{A counterexample}\label{SectionCounterex}
	As briefly mentioned in the Introduction, our assumption (\textit{A3}) can be interpreted as a control from above for the growth at $\infty$ of the nonlinearity $f$ by a suitable power of $e^t$. In the spirit of Brezis-Merle \cite[Example 2]{BM}, we show an example of nonlinearities which are still subcritical in the sense of the Trudinger-Moser inequality without satisfying (\textit{A3}) and for which one may find a positive potential $a(x)$ such that the problem \eqref{DIRh} admits \textit{unbounded} solutions, in a suitable \textit{distributional} sense.
	\vskip0.2truecm
	To this aim, let $f(t)=e^{t^\alpha}$ with $\alpha\in(1,\frac N{N-1})$. Notice that $f$ clearly satisfies (\textit{A1}) and (\textit{A2}), it is subcritical in the sense of Trudinger-Moser, but the condition (\textit{A3}) is not fulfilled. Moreover, fix parameters $\delta:=\frac{(\alpha-1)N+1}{\alpha N}>0$ and $\gamma:=\frac{\alpha-1}{\alpha}\in(0,1)$ and consider the family of functions $$\phi_\beta(t)= t+\beta t^{\gamma}- \delta\log t,$$
	where $\beta>0$ is a positive parameter that will be chosen later. Notice that $\phi_\beta(t)>t-\delta\log t$ and $\phi_\beta'(t)>1-\frac dt$. Hence, defining
	$$u_\beta(r):= \phi_\beta(l(r))^\frac{1}{\alpha}, \quad r = |x|,$$
	where $l(r):=\log\big(\frac1r\big)$, there exists $\rho_0>0$ such that for any $r\in(0,\rho_0)$ there holds $u_\beta(r)>0$ and $u_\beta'(r)<0$. In the sequel, we adapt the strategy in \cite[\S  6]{MIO_2}.\\
	First, using an implicit function argument as in \cite[Lemma 6.2]{MIO_2}, one can show (up to a smaller value of $\rho_0$) that for any $\rho\in(0,\rho_0)$ there exists $\beta=\beta(\rho)\sim l(\rho)^{\frac1\alpha}$ as $\rho\to0$ such that $u_{\beta(\rho)}(\rho)-\frac{\beta(\rho)}\alpha=0$ on $\dB_{\rho}(0)$. With this choice, one has $u_{\beta(\rho)}(r)\sim l(r)^{\frac1\alpha}$ as $r\to 0$. Then, for $r\in(0,\rho)$ we compute
	\begin{equation}\label{N-lapl_counterex}
	\begin{split}
	-\Delta_Nu_{\beta(\rho)}(r)&=-\frac d{dr}\left[r^{N-1}|u_{\beta(\rho)}'(r)|^{N-2}u_{\beta(\rho)}'(r)\right]\\
	&=\frac{N-1}{\alpha^Nr^N}\left[\bigg(1-\frac1\alpha\bigg)u_{\beta(\rho)}^{N-1-\alpha N}(r)\phi_{\beta(\rho)}'(l(r))^N-u_{\beta(\rho)}^{(1-\alpha)(N-1)}(r)\phi_{\beta(\rho)}'(l(r))^{N-2}\phi_{\beta(\rho)}''(l(r))\right].
	\end{split}
	\end{equation}
	Using the relations  $\frac12\leq\phi_{\beta(\rho)}'\leq 1$ and $\phi_{\beta(\rho)}''\leq\delta l(r)^{-2}$ for $r$ sufficiently small, one infers from \eqref{N-lapl_counterex} that there exists $\rho_1<\rho_0$ such that for any $\rho\in(0,\rho_1)$ and  $r\in(0,\rho)$ one has $-\Delta_Nu_{\beta(\rho)}(r)>0$.
	
	\noindent Let us now fix $\rho\in(0,\rho_1)$ and define
	\begin{equation}\label{u_counterex}
	w(r):=N^{\frac1\alpha}\bigg(u_{\beta(\rho)}(r)-\frac{\beta(\rho)}\alpha\bigg).
	\end{equation}
	We have that $w>0$ solves pointwise
	\begin{equation*}
	\begin{cases}
	-\Delta_Nw=a(x)e^{w^\alpha}\quad&\mbox{in }B_\rho(0)\setminus\{0\},\\%
	w=0\quad&\mbox{on }\partial B_\rho(0),
	\end{cases}
	\end{equation*}
	with $a(x):=-e^{-w^\alpha}\Delta_Nw>0$. Now we claim that $a\in L^\infty(B_\rho(0))$ but $w\not\in L^\infty(B_\rho(0))$. Indeed,
	\[
	\begin{split}
	w^\alpha (r)&= Nu_{\beta(\rho)}^\alpha(r)\bigg(1- \frac{\beta(\rho)}{\alpha u_{\beta(\rho)}(r)}\bigg)^\alpha=Nu_{\beta(\rho)}^\alpha(r)-N\beta(\rho)u_{\beta(\rho)}^{\alpha-1}(r)+o(1)\\
	&=Nl(r)+N\beta(\rho)l(r)^\gamma-N\delta\log l(r)-N\beta(\rho) l(r)^{\frac{\alpha-1}{\alpha}}+o(1)
	\end{split}
	\] 
	as $r\to 0$. Recalling now that $\gamma = \frac{\alpha-1}{\alpha}$, we get 
	\begin{equation*}\label{w_in0}
	w^{\alpha}(r) = Nl(r)-N\delta\log l(r)+o(1)
	\end{equation*}
	as $r\to 0$. Therefore,
	\begin{equation*}
	\begin{split}
	a(x)=-N^\frac1\alpha\Delta_Nu_{\beta(\rho)}\,e^{-w^{\alpha}}& = \frac{N^{\frac{1}{\alpha}}(N-1)(\alpha-1)}{\alpha^{N+1}r^N} l(r)^{\frac{N-1-\alpha N}{\alpha}} r^N l(r)^{N\delta} (1+o(1))\\
	& = \frac{N^{\frac{1}{\alpha}}(N-1)(\alpha-1)}{\alpha^{N+1}} +o(1).
	\end{split}
	\end{equation*}
	as $r\to0$, by our choice of $\delta$. Hence $a\in L^\infty(B_\rho(0))$. However, $w$ is not bounded near 0 as it inherits the same behaviour of $u_{\beta(\rho)}$.
	
	\vskip0.2truecm
	Now we want to prove that $w$ is an \textit{entropy solution} of the problem \eqref{DIRh}. This kind of solutions has been introduced in the context of quasilinear elliptic problems in \cite{entropy_sol} in order to weaken the notion of weak solution for problems with $L^1$-data. Here we recall the precise meaning.
	\begin{defn}
		We say that $u\in T^{1,N}_0(\Omega)$ if $u$ is measurable and $T_ku\in W^{1,N}_0(\Omega)$ for any $k>0$, where $T_ku$ is defined as the truncation of $u$ at level $k$, namely
		\begin{equation*}
		T_k(s)=\begin{cases}
		-k\quad&\mbox{for }s<-k,\\%
		s\quad&\mbox{for }-k\leq s\leq k,\\
		k\quad&\mbox{for }s>k.
		\end{cases}
		\end{equation*}
	\end{defn}
	\begin{defn}\label{entropysol:Def}
		A function $u\in T^{1,N}_0(\Omega)$ is called an \textit{entropy solution} of problem \eqref{DIRh} if for any test function $\varphi\in C^\infty_0(\Omega)$ there holds
		\begin{equation}\label{entropysol:def}
		\int_{\{|u-\varphi|<k\}}|\nabla u|^{N-2}\nabla u(\nabla u-\nabla\varphi)dx\leq\int_{\Omega}T_k(u-\varphi)h(x,u(x))dx.
		\end{equation}
	\end{defn}
	\noindent We start proving $T_kw\in W^{1,N}_0(B_\rho(0))$ for any $k>0$ and $w$ defined in \eqref{u_counterex}. Indeed, $T_kw$ is radial and
	\begin{equation*}
	T_kw(r)=\begin{cases}
	w(r)\quad&\mbox{for }\,r\in [r_k,\rho],\\
	w(r_k)\quad&\mbox{for }\,r\in [0,r_k),
	\end{cases}
	\end{equation*}
	for some suitable $r_k\in(0,\rho)$, so it is easy to see that
	\begin{equation*}
	\int_{r_k}^\rho\bigg|\frac{dw}{dr}\bigg|r^{N-1}dr\sim\int_{r_k}^\rho w(r)^{N(1-\alpha)}\frac{(N-\gamma(1-\log r))^N}{r}dr<C_k.
	\end{equation*}
	In order to show that $u$ satisfies \eqref{entropysol:def}, let $\varphi\in C^\infty_0(B_\rho(0))$ and notice that $T_k(w-\varphi)\in W^{1,N}_0(B_\rho(0))$. By our choice of $a$, the problem \eqref{DIRh} is pointwise satisfied, thus we may test it in $B_\rho(0)$ by $T_k(w-\varphi)$:
	\begin{equation*}
	-\int_{B_\rho(0)}\Delta_N wT_k(w-\varphi)dx=\int_{B_\rho(0)} a(x)e^{w^\alpha} T_k(w-\varphi)dx.
	\end{equation*}
	Note that all is well-defined as we proved $a\in L^\infty(B_\rho(0))$ and $e^{w^\alpha}\in L^1(B_\rho(0))$. Integrating by parts the left-hand side and recalling $T_k(w-\varphi)_{|_{\partial B_\rho(0)}}=0$, we have
	\begin{equation*}
	\int_{B_\rho(0)}|\nabla w|^{N-2}\nabla w \nabla T_k(w-\varphi)dx=\int_{B_\rho(0)} a(x)e^{w^\alpha} T_k(w-\varphi)dx
	\end{equation*}
	which is the case of the equality in \eqref{entropysol:def}.
	\vskip0.2truecm
	We have therefore showed that, despite $w$ is an entropy solution of the problem
	\begin{equation*}
	\begin{cases}
	-\Delta_Nw=a(x)e^{w^\alpha}\quad&\mbox{in }B_\rho(0),\\%
	w=0\quad&\mbox{on }\partial B_\rho(0),
	\end{cases}
	\end{equation*}
	however $w\notin L^\infty(B_\rho(0))$, as $w(r)\sim l(r)^\frac1\alpha$ as $r\to0$.

\begin{remark}\label{RmkCounterex}
	This counterexample shows that the class of the nonlinearities considered by assumption (\textit{A3}) is sharp in order to have the property of (uniform) boundedness within the class of entropy solutions. Indeed, coming back to problem \eqref{DIRh} under assumption (\textit{A3}) and looking for entropy solutions in the sense of Definition \ref{entropysol:Def}, since this time (\textit{A3}) implies a control of the nonlinearity by a suitable power of $e^t$, say $p$, one has
	$$\intOmega|h(x,u(x))|^\alpha dx\leq C\intOmega e^{\alpha pu}<+\infty,$$
	where the last inequality is due to \cite[Corollary 1.7]{AP}. Therefore entropy solutions are classical, and our analysis and thus Theorem \ref{thm} apply.
\end{remark}

\end{document}